\documentclass[12pt]{amsart}
\usepackage{amsmath,amssymb,amsthm,mathrsfs,color,dsfont}
\usepackage{hyperref}
\usepackage{enumerate}
\usepackage{xcomment}
\usepackage[mathlines]{lineno}
\modulolinenumbers[1]
\usepackage{comment}
\usepackage{marginnote}
\usepackage{graphicx}
\theoremstyle{definition}
\newtheorem{thm}{Theorem}[section]

\newtheorem{prop}{Proposition}[section]

\newtheorem{rem}{Remark}[section]
\numberwithin{equation}{section}

\theoremstyle{definition}
\newtheorem{definition}{Definition}[section]

\theoremstyle{remark}


%


\newcommand{\C}{{\mathbb C}}

\newcommand{\R}{{\mathbb R}}

\def\11{{\rm 1~\hspace{-1.4ex}l} }

\newcommand{\vertiii}[1]{{\left\vert\kern-0.25ex\left\vert\kern-0.25ex\left\vert #1
    \right\vert\kern-0.25ex\right\vert\kern-0.25ex\right\vert}}

\parskip=0.3em
\begin{document}
\title[Degeneracy and multiplicity in one-dimensional NLS]{Degeneracy and multiplicity of standing-waves of the one-dimensional non-linear Schr\"odinger equation for a class of algebraic non-linearities}

\author[D.~Garrisi]{Daniele Garrisi}
\address[Daniele Garrisi]{%
  Department of Mathematical Sciences\\
  University of Nottingham Ningbo China\\
  199 Taikang East Road\\
  315100, Ningbo, People's Republic of China}
\email{daniele.garrisi@nottingham.edu.cn}
            

\author[V.~Georgiev]{Vladimir Georgiev}
\address[Vladimir Georgiev]{%
Dipartimento di Matematica, Universit\`a di Pisa\\
Largo B. Pontecorvo 5, 56100 Pisa, Italy}
\address[Vladimir Georgiev]{%
Faculty of Science and Engineering \\ 
Waseda University \\
3-4-1, Okubo, Shinjuku-ku, Tokyo 169-8555 \\
Japan}
\address[Vladimir Georgiev]{%
Institute of Mathematics and Informatics, Bulgarian Academy of Sciences, Georgi Bonchev Str.\\
Block 8, 1113 Sofia\\
Bulgaria}
\email{georgiev@dm.unipi.it}%
\thanks{The work was supported by the University of Nottingham Ningbo China (New Researcher's Grant); 
``Problemi stazionari e di evoluzione nelle equazioni di campo non-lineari dispersive'' of GNAMPA 2020, the project PRIN 2020XB3EFL; the Italian Ministry of Universities and Research; the Top Global University Project, Waseda University; 
the Institute of Mathematics and Informatics, Bulgarian Academy of Sciences.}
\date{\today}
\keywords{Orbital stability; degeneracy; uniqueness}
\begin{abstract}
We study the existence, the stability and the non-degeneracy of normalized
standing-waves solutions to a one dimensional non-linear Schr\"odinger equation.
The non-linearity belongs to a class of algebraic functions appropriately defined.
We can show that for some of these non-linearities 
one can observe the existence of degenerate minima, and the
multiplicity of positive, radially symmetric minima having the
same mass and the same energy. We also prove the stability of
the ground-state and the stability of normalized standing-waves whose profile is a minimum of the
energy constrained to the mass.
\end{abstract}
\maketitle
\section{Introduction}
In this work we classify the stability, uniqueness and non-degeneracy of
normalized standing-waves to the non-linear Schr\"odinger equation
in dimension one
\begin{linenomath}
\begin{equation}
\label{eq.NLS}
\tag{NLS}
i\partial_t\phi(t,x) + \Delta_x \phi(t,x) - G'(|\phi(t,x)|)\frac{\phi(t,x)}{|\phi(t,x)|} = 0,
\end{equation}
\end{linenomath}
where \(G\colon [0,+\infty)\to\mathbb{R}\) is a \(C^2\) non-linearity such that
\(G(0) = G'(0) = G''(0) = 0\). A standing-wave is a solution \(\phi\) to \eqref{eq.NLS} which can be written as 
\begin{linenomath}
\begin{equation}
\label{eq.sw}
\phi(t,x) = e^{-i\omega t} R(x),
\end{equation}
\end{linenomath}
where \(\omega\in\mathbb{R}\) and \(R\in H^1 (\mathbb{R};\mathbb{R})\). This function is sometimes called \textsl{profile}. 
A normalized standing-wave is a standing-wave 
such that \(E(R) = \inf_{S(\lambda)} E\) for some \(\lambda > 0\), where 
\begin{linenomath}
\begin{equation}
\label{eq.E}
E\colon H^1 (\mathbb{R};\mathbb{C})\to\mathbb{R},\quad
E(u) := \frac{1}{2}\int_{-\infty}^{+\infty} |u'(x)|\sp 2 dx +
\int_{-\infty}^{+\infty} G(|u(x)|)dx
\end{equation}
\end{linenomath}
and
\begin{linenomath}
\begin{equation*}
S(\lambda) := \bigg\{u\in H\sp 1(\mathbb{R};\mathbb{C})\bigg|\hskip 1pt M(u) := \lambda\bigg\},\quad\lambda > 0,
\end{equation*}
\end{linenomath}
where
\begin{linenomath}
\begin{equation}
\label{eq.M}
M(u) := \int_{-\infty}^{+\infty} |u(x)|^2 dx.
\end{equation}
\end{linenomath}
We study the following four problems: the orbital stability of the ground-state, the uniqueness, the orbital stability of the standing-waves and non-degeneracy of minima. We assume that \(G\) is such that 
\begin{linenomath}
\begin{equation}
\label{eq.G}
\tag{G}
G(s) = -\frac{s^2 V(s)}{2},
\end{equation}
\end{linenomath}
where \(V\) is the function satisfying the implicit equation
\begin{linenomath}
\begin{equation}
\label{eq.V}
\tag{V}
s^2 = aV^3 + bV^2 + cV,\quad V(0) = 0.
\end{equation}
\end{linenomath}
We introduce the notation
\begin{linenomath}
\begin{equation*}
\sigma := \frac{b^2}{ac}
\end{equation*}
\end{linenomath}
and require
\begin{linenomath}
\begin{equation}
\label{eq.C}
\tag{C}
a,c > 0,\quad\sigma < 3.
\end{equation}
\end{linenomath}
\subsubsection*{Orbital stability of the ground-state and existence of minima}
We call \textsl{ground-state} the set of minima of \(E\) constrained to \(S(\lambda)\) and use the notation
\begin{linenomath}
\begin{equation*}
\mathcal{G}_\lambda := \{u\in S(\lambda)\mid E(u) = \inf_{S(\lambda)} E\}.
\end{equation*}
\end{linenomath}
 In the space
\(X := H^1 (\mathbb{R};\mathbb{C})\) we define the metric \(d\)
induced by the scalar product
\begin{linenomath}
\begin{equation*}
(f,g)_{H^1} := \text{Re}\int_{-\infty}^{\infty} f'\overline{g'} dx + \text{Re}\int_{-\infty}^{\infty} f\overline{g}dx.
\end{equation*}
\end{linenomath}
For every \(u_0\in H^1\), for every \(t\geq 0\), we set
\(U(t,u_0) := \phi(t,\cdot)\), where \(\phi\) is the unique solution
to the initial value problem. In Proposition~\ref{prop.branch}, we prove that \(G\) is \(C^2\). Therefore,
\(G'\) is locally Lipschitz.
\begin{definition}
A subset \(S\subseteq X\) is \textsl{stable} if for every
\(\varepsilon > 0\) there exists \(\delta > 0\) such that, for every \(v\in X\), \(d(v,S) < \delta\)
implies \(d(U(t,v),S) < \varepsilon\) for every \(t\geq 0\).
\end{definition}
The definition given above is well-posed as long as the initial value problem for \eqref{eq.NLS} exists, is unique
and defined for every \(t\in [0,+\infty)\). This will be pointed out in the proof of Theorem~1.
\begin{thm}
\label{thm.stability.gs}
If \eqref{eq.G}, \eqref{eq.V} and \eqref{eq.C} hold, then 
for every \(\lambda > 0\), the set \(\mathcal{G}_\lambda\) is non-empty and stable.
\end{thm}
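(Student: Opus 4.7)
The plan is to prove Theorem~\ref{thm.stability.gs} by the classical Cazenave--Lions concentration--compactness strategy, combined with conservation of mass and energy along the flow $U$. Concretely, I will show that every minimizing sequence for
\[
m(\lambda) := \inf_{S(\lambda)} E
\]
is relatively compact in $H^1(\mathbb{R};\mathbb{C})$ up to translations; this simultaneously yields $\mathcal{G}_\lambda \neq \varnothing$ and its orbital stability.

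The first step is to extract the asymptotics of $G$ from \eqref{eq.V}. Under \eqref{eq.C} the derivative of $aV^3 + bV^2 + cV$ in $V$ has discriminant $4(b^2 - 3ac) < 0$ and is therefore strictly positive, so the polynomial is increasing and the branch with $V(0) = 0$ from Proposition~\ref{prop.branch} satisfies $V(s) \sim s^2/c$ as $s \to 0^+$ and $V(s) \sim (s^2/a)^{1/3}$ as $s \to \infty$. Hence $G(s) < 0$ for $s > 0$, $G(s) \sim -s^4/(2c)$ near the origin, and $|G(s)| \leq C s^{8/3}$ for large $s$; both exponents are $L^2$-subcritical in dimension one. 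Combined with the one-dimensional Gagliardo--Nirenberg inequality $\|u\|_p^p \leq C\|u\|_2^{p/2+1}\|u'\|_2^{p/2-1}$, this makes $E$ coercive on $S(\lambda)$, so $m(\lambda)$ is finite and every minimizing sequence is bounded in $H^1$. The same bounds upgrade the local theory (based on the locally Lipschitz $G'$ from Proposition~\ref{prop.branch}) to global well-posedness of \eqref{eq.NLS} in $H^1$, validating the flow $U(t,\cdot)$ for all $t \geq 0$.

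Next I would apply Lions' concentration--compactness lemma to the densities $\rho_n := |u_n|^2$ of a minimizing sequence $(u_n) \subset S(\lambda)$. Vanishing is excluded because $m(\lambda) < 0$: the test function $u_\mu(x) := \mu^{1/2} u(\mu x)$, which lies in $S(\lambda)$ whenever $u \in S(\lambda)$, has $E(u_\mu) \sim \mu^2 \|u'\|_2^2/2 - \mu \|u\|_4^4/(2c)$, which is strictly negative for small $\mu > 0$ thanks to the quartic behaviour of $G$ near zero. Dichotomy is excluded via the strict subadditivity $m(\lambda) < m(\lambda_1) + m(\lambda_2)$ for every decomposition $\lambda = \lambda_1 + \lambda_2$ with $\lambda_i > 0$. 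For this I will exploit the scaling identity
\[
E\bigl(u(\cdot/\theta)\bigr) = \frac{1}{2\theta}\|u'\|_2^2 + \theta \int G(|u|)\,dx,
\]
which rewrites as $E(u(\cdot/\theta)) - \theta E(u) = (\theta^{-1} - \theta)\|u'\|_2^2/2$. Applied to a sequence of near-minimizers $u \in S(\lambda)$ and observing that $m(\lambda) < 0$ forces a uniform lower bound $\|u'\|_2^2 \geq 2K_0 > 0$ (again via Gagliardo--Nirenberg and the subcritical bound on $|G|$), this gives $m(\theta\lambda) \leq \theta m(\lambda) + K_0(\theta^{-1} - \theta) < \theta m(\lambda)$ for every $\theta > 1$, so $\lambda \mapsto m(\lambda)/\lambda$ is strictly decreasing and the strict subadditivity follows.

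The remaining step, and the point that requires the most care, is to convert compactness of minimizing sequences into the stability statement. I will argue by contradiction: if $\mathcal{G}_\lambda$ were not stable, there would exist $v_n \in X$ with $d(v_n, \mathcal{G}_\lambda) \to 0$, times $t_n \geq 0$, and $\varepsilon_0 > 0$ with $d(U(t_n,v_n), \mathcal{G}_\lambda) \geq \varepsilon_0$. By continuity of $M$ and $E$ on $H^1$ together with their conservation along $U$, the sequence $w_n := U(t_n,v_n)$ satisfies $M(w_n) \to \lambda$ and $E(w_n) \to m(\lambda)$; rescaling $w_n$ by $(\lambda/M(w_n))^{1/2}$ produces a genuine minimizing sequence on $S(\lambda)$, which by the concentration--compactness step admits a subsequence converging, up to translations and phase multiplication, to some $R \in \mathcal{G}_\lambda$. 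Since $\mathcal{G}_\lambda$ is invariant under translations and phase rotations, this contradicts $d(w_n, \mathcal{G}_\lambda) \geq \varepsilon_0$, completing the proof.
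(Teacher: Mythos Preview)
Your argument is correct and complete: the asymptotics $G(s)\sim -s^4/(2c)$ near $0$ and $|G(s)|\lesssim s^{8/3}$ at infinity follow from \eqref{eq.V} exactly as you say, and with these in hand the Cazenave--Lions scheme (coercivity, $m(\lambda)<0$, strict subadditivity via dilation, compactness up to translations, and the stability-by-contradiction step using conservation of $E$ and $M$) goes through without difficulty.

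The paper, however, does not carry out this argument directly. Its proof of Theorem~\ref{thm.stability.gs} consists in verifying that $G$ satisfies three structural hypotheses called (G1), (G2a), (G2b) in \cite{GG17}---essentially $G(s_0)<0$ for some $s_0$, the growth bound $|G'(s)|\le C|s|^3$, and the lower bound $G(s)\ge -C|s|^4$ near the origin---and then invoking \cite[Theorem~1.1]{GG17}, which packages the concentration--compactness machinery once and for all. The estimates the paper uses to check (G2a) and (G2b) are slightly different from yours (it bounds $|V'|$ and $|V|$ uniformly via the lower bound $3aV^2+2bV+c\ge(3-\sigma)c/3$, obtaining $|G'(s)|\le C|s|^3$ globally rather than the sharper $s^{8/3}$ asymptotics), but either set suffices. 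In short, you reprove in this specific setting what the paper outsources to \cite{GG17}; your route is more self-contained, the paper's is shorter and keeps the focus on the features of $G$ that are genuinely new here.
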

The stability of the ground-state in dimension one has been proved in \cite{GG17} under more general assumptions than the ones required in this work. In \S~\ref{sect.stability.gs}, we are going to check that a non-linearity satisfying 
\eqref{eq.G} and \eqref{eq.V}, also satisfies properties (G1), (G2a) and (G2b) of \cite{GG17}. Therefore, the stability
of \(\mathcal{G}_\lambda\) follows from \cite[Theorem~1.1]{GG17}.
\subsubsection*{Uniqueness of minima} 
By \(H^1 _{r,+} (\mathbb{R};\mathbb{R})\), we denote the space of
real-valued, positive and symmetric function. 
If \(u\in\mathcal{G}_\lambda\), then \(zu(\cdot + y)\) is also a minimum of \(E\) constrained to \(S(\lambda)\)
for every complex number \(z\) with \(|z| = 1\) and \(y\in\mathbb{R}\). 
Therefore, the set
\begin{linenomath}
\begin{equation}
\label{eq.gsu}
\mathcal{G}_\lambda (u) := 
\{zu(\cdot + y)\mid (z,y)\in S^1\times\mathbb{R}\}
\subseteq\mathcal{G}_\lambda
\end{equation}
\end{linenomath}
is always uncountable for every \(u\in\mathcal{G}_\lambda\). However,
one can prove that in every set \eqref{eq.gsu} there exists a unique
\(R\in H_{r,+} ^1\). For this fact, we refer to \cite{BJM09,Mar09} for the dimension \(N\geq 2\)
and to \cite[Lemma~2.4]{GG17} for the dimension \(N = 1\). In other words, the set
\(\mathcal{G}_\lambda (u)\cap H_{r,+}^1\) contains only one element. 
The problem of the uniqueness aims to calculate the cardinality of the set
\begin{linenomath}
\begin{equation*}
K_\lambda := \mathcal{G}_\lambda\cap H_{r,+}^1
\end{equation*}
\end{linenomath}
for a fixed \(\lambda > 0\). In the literature, there are not many non-linearities where one can evaluate or estimate the cardinality of 
\(K_\lambda\). When \(G(s) = -as^p\), the pure-power case, it is known
that \(|K_\lambda| = 1\) in every dimension. In fact, given \(u\in\mathcal{G}_\lambda\),
there exists \(\omega\) such that
\begin{linenomath}
\begin{equation*}
\Delta R  + aR ^{p - 1} - \omega R = 0,\quad R := |u|.
\end{equation*}
\end{linenomath}
By inspection, 
\begin{linenomath}
\begin{equation*}
u(x) = z(\omega a^{-1})^{\frac{1}{p - 2}}R_1(x),\quad 
a^{-\frac{2}{p - 2}}\omega^{\frac{6 - p}{2(p - 2)}}\|R_1\|_{L^2}^2 = \lambda,
\end{equation*}
\end{linenomath}
where \(R_1\) is the unique, positive, vanishing at infinity and symmetric
solution to the elliptic equation \(\Delta R_1  + aR_1 ^{p - 1} - R_1 = 0\). Although 
rescaling properties do not hold, uniqueness results for
the non-linear Schr\"odinger equation has been obtained
for the double power non-linearity \(G(s) = -as^p + bs^q\) with
\(2 < p < 6\) and \(p < q\) and dimension one in \cite{GG17}. In \cite{GG19} it has been proved that \(|K_\lambda| = 1\)
for every \(\lambda > 0\) for some triple power non-linearities.
\begin{thm}[Uniqueness]
\label{thm.uniqueness}
\

Let \(G\colon\mathbb{R}_+\to\mathbb{R}\) be a continuously differentiable function satisfying \eqref{eq.G}, \eqref{eq.V}
and \eqref{eq.C}. Therefore
\begin{enumerate}[(i)]
\itemsep=0.3em
\item if \(0 < \sigma \leq 2\), there exists a unique positive symmetric minimum, that is \(|K_\lambda| = 1\) for
every \(\lambda > 0\);
\item if \(2 < \sigma < 3\), there exists a unique level \(0 < \lambda_2 = \lambda_2 (\sigma)\) such that
\(|K_{\lambda_2}| = 2\). That is, there are two positive, symmetric minima of \(E\) constrained to
\(S(\lambda)\). If \(\lambda\neq\lambda_2\), again
\(|K_\lambda| = 1\).
\end{enumerate}
\end{thm}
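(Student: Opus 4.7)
\emph{Proof plan.} The idea is to parametrize the set of positive symmetric $H^1$-critical points of $E|_{S(\lambda)}$ by a single real parameter (the Lagrange multiplier, equivalently the value $V_0$ of $V$ at the peak of $R$), to derive an explicit formula for the mass $\lambda(V_0)$, and then to identify the minimizers by comparing energies along the resulting, possibly multi-valued branches.

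First I would note that every $R\in K_\lambda$ solves the Euler--Lagrange equation $R''+\omega R-G'(R)=0$ for some $\omega<0$, is even, strictly decreasing on $[0,\infty)$, and vanishes at infinity. Multiplying by $R'$ and using $G(s)=-s^2 V(s)/2$ gives the first integral $(R')^2=R^2(V_0-V(R))$ with $V_0:=-\omega=V(R(0))$, so that $V_0\mapsto R(\,\cdot\,;V_0)$ is a bijection with the set of all such solutions. Writing $\lambda=\int R^2\,dx$, inserting $dx=-dR/(R\sqrt{V_0-V(R)})$ and substituting $2R\,dR=(3aV^2+2bV+c)\,dV$ reduces the integral to elementary beta integrals and yields
\[\lambda(V_0)=2\sqrt{V_0}\Bigl(c+\tfrac{4b}{3}V_0+\tfrac{8a}{5}V_0^2\Bigr),\qquad \lambda'(V_0)=\frac{c+4bV_0+8aV_0^2}{\sqrt{V_0}}.\]
For (i), the discriminant of $c+4bV_0+8aV_0^2$ equals $16(b^2-2ac)=16ac(\sigma-2)\le 0$, so $\lambda'\ge 0$ and $\lambda$ is a strictly increasing bijection of $(0,\infty)$ onto itself; hence each $\lambda>0$ is attained by a unique $V_0$ and $|K_\lambda|=1$.

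For (ii), the sub-case $b\ge 0$ still gives $\lambda$ monotone (all coefficients of the quadratic are non-negative on $V_0\ge 0$), so multiplicity can only occur for $b<0$ and I treat that as the substantial sub-case. Then $c+4bV_0+8aV_0^2$ has two positive roots $0<t_-<t_+$, $\lambda$ is increasing on $(0,t_-)\cup(t_+,\infty)$ and decreasing on $(t_-,t_+)$, and for $\lambda\in(\lambda(t_+),\lambda(t_-))$ there are three admissible values $V_1<V_2<V_3$, one in each monotone sub-interval. To rank them by energy I would differentiate $\mathcal{E}(V_0):=E(R(\,\cdot\,;V_0))$ along the family, use $-R''+G'(R)=-V_0 R$ to obtain the Pohozaev-type identity $\mathcal{E}'(V_0)=-\tfrac{V_0}{2}\lambda'(V_0)$, and integrate by parts to get
\[\mathcal{E}(V')-\mathcal{E}(V)=\tfrac{1}{2}\int_{V}^{V'}\bigl(\lambda(s)-\lambda\bigr)\,ds\]
whenever $\lambda(V)=\lambda(V')=\lambda$. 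A direct sign inspection on $(V_1,V_2)$ and $(V_2,V_3)$ shows that $V_2$ has strictly larger energy than both $V_1$ and $V_3$ and so is never a minimizer, while the function $\Delta(\lambda):=\mathcal{E}(V_1(\lambda))-\mathcal{E}(V_3(\lambda))$ satisfies $\Delta'(\lambda)=(V_3-V_1)/2>0$ by the chain rule applied to the same identity. An endpoint computation gives $\Delta(\lambda(t_+)^+)<0$ and $\Delta(\lambda(t_-)^-)>0$, so by strict monotonicity there is a unique $\lambda_2\in(\lambda(t_+),\lambda(t_-))$ with $\Delta(\lambda_2)=0$: at $\lambda_2$ both $V_1$ and $V_3$ are minimizers, and for every other $\lambda\ne\lambda_2$ in this interval exactly one of them is. Outside $(\lambda(t_+),\lambda(t_-))$ only one branch is admissible and $|K_\lambda|=1$ is automatic.

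The main obstacle is the final step, namely the passage from the count of critical points to the count of minimizers. The Pohozaev identity $\mathcal{E}'=-(V_0/2)\lambda'$ makes the strict monotonicity of $\Delta$ transparent and reduces the comparison to one-variable calculus, but the endpoint signs of $\Delta$ at $\lambda(t_\pm)$ and the exclusion of the middle branch $V_2$ must be checked carefully via the sign structure of $\lambda(V_0)-\lambda$ on each sub-interval of the fold; one must also handle the sub-case $b\ge 0$ cleanly, so that it collapses to case (i) and does not produce a spurious level $\lambda_2$.
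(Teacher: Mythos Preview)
Your approach is essentially the paper's: explicit computation of $\lambda(\omega)$ via the substitution $s^2\mapsto V$, discriminant analysis of $\lambda'$, the identity $e'(\omega)=-\tfrac{\omega}{2}\lambda'(\omega)$, and energy comparison by integration by parts to locate the unique crossing level $\lambda_2$. Your direct chain-rule computation $\Delta'(\lambda)=(V_3-V_1)/2>0$ is a slightly cleaner variant of the paper's separate monotonicity arguments for the two area functions $g_1,g_2$, and your explicit flagging of the sub-case $b\ge 0$ is a point of care the paper omits---it tacitly takes $b<0$ in part~(ii), since for $b\ge 0$ the quadratic $8aV_0^2+4bV_0+c$ has no positive roots and no fold occurs.
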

\subsubsection*{Stability of standing-waves}
From \cite[Definition~2.3]{BBBM10}, a standing-wave \eqref{eq.sw} is stable if and only if the set \(\mathcal{G}_\lambda (R)\)
is stable. While the stability of the ground-state holds under mild assumptions described in \cite{GG17}, 
the stability of the set \eqref{eq.gsu} has been so far obtained
under very specific assumptions on the nonlinearity: pure powers in every dimension, \cite{CL82}, double-power non-linearities in dimension one, \cite{IK93,Oht95,Mae08,LTZ21,GG17}, coupled
Schr\"odinger systems where the non-linear term is a homogenous polynomial of degree four
\cite{NW11} in dimension one. In \cite{GG19,FH21}, an extension to a triple-power case has been done in some cases
Possible appearance of resonances for the linearized NLS was studied in \cite{CG16}.


\begin{thm}[Stability]
\label{thm.stability}
If \eqref{eq.G}, \eqref{eq.V} and \eqref{eq.C} holds, then for every \(\lambda > 0\), the set \(\mathcal{G}_\lambda\) is
non-empty and stable.
\end{thm}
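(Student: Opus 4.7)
The plan is to reduce the statement to the abstract orbital-stability result \cite[Theorem~1.1]{GG17}, exactly as already announced for Theorem~\ref{thm.stability.gs}. The assertion is verbatim the same---non-emptiness and stability of the full set $\mathcal{G}_\lambda$, not of an individual orbit $\mathcal{G}_\lambda(u)$---so the task reduces to verifying that the structural hypotheses (G1), (G2a), (G2b) of \cite{GG17} are implied by \eqref{eq.G}, \eqref{eq.V}, \eqref{eq.C}. These verifications are carried out in \S~\ref{sect.stability.gs}, and Theorem~\ref{thm.stability} then follows immediately by invoking \cite[Theorem~1.1]{GG17}.

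Concretely, I would first check (G1). Proposition~\ref{prop.branch} ensures $G\in C^2$, and the implicit relation $s^2 = aV^3+bV^2+cV$ yields $V(s)\sim s^2/c$ as $s\to 0^+$ (hence $G(s) = O(s^4)$) and $V(s)\sim (s^2/a)^{1/3}$ as $s\to\infty$ (hence $G(s) = O(s^{8/3})$), which is strictly below the $L^2$-critical growth $s^6$ in dimension one. This delivers coercivity of $E$ on $S(\lambda)$ and $H^1$-boundedness of minimizing sequences. For (G2a)--(G2b) I would establish the strict sub-additivity of $\lambda\mapsto\inf_{S(\lambda)} E$, from which Lions' concentration-compactness principle rules out vanishing and dichotomy and yields both $\mathcal{G}_\lambda\neq\emptyset$ and the relative compactness modulo translations of every minimizing sequence. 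Local Lipschitz continuity of $G'$ (Proposition~\ref{prop.branch}) together with conservation of $E$ and $M$ and the coercivity bound gives global well-posedness for \eqref{eq.NLS}, so the evolution $U(t,\cdot)$ appearing in the stability definition is well-defined on $[0,+\infty)$.

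Once these ingredients are in place, the stability conclusion is the standard contradiction argument: given $v_n\to v\in\mathcal{G}_\lambda$ in $H^1$ and times $t_n\ge 0$ with $d(U(t_n,v_n),\mathcal{G}_\lambda)\ge\varepsilon$, conservation of $E$ and $M$ forces $E(U(t_n,v_n))\to\inf_{S(\lambda)} E$ and $M(U(t_n,v_n))\to\lambda$, so $U(t_n,v_n)$ is a minimizing sequence (after an $o(1)$ mass-rescaling to land exactly on $S(\lambda)$). The compactness step then produces a subsequence converging, up to translation, to an element of $\mathcal{G}_\lambda$, contradicting $d(U(t_n,v_n),\mathcal{G}_\lambda)\ge\varepsilon$.

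The main obstacle is the strict sub-additivity of $\lambda\mapsto\inf_{S(\lambda)} E$. For pure-power nonlinearities this is a one-line rescaling; here $G$ is defined only implicitly through a cubic in $V$, so homogeneity fails. My strategy is to extract monotonicity and convexity properties of $V$ by implicit differentiation of \eqref{eq.V}, combine them with the two-sided growth estimates used in (G1), and then run a rescaling/comparison argument (in the spirit of \cite{CL82,GG17}) uniformly across the full parameter region $a,c>0$, $\sigma<3$ prescribed by \eqref{eq.C}. Everything else in the proof is routine once this sub-additivity is secured.
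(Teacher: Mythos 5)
Your reduction proves only the literal wording of the statement, which indeed coincides with Theorem~\ref{thm.stability.gs}; but the actual content of Theorem~\ref{thm.stability}, as the surrounding text and the paper's own proof make clear, is the stability of each individual orbit $\mathcal{G}_\lambda(u)$, i.e.\ standing-wave stability in the sense of \cite[Definition~2.3]{BBBM10}. The paper's proof is two lines long and rests on an input your proposal never touches: Theorem~\ref{thm.uniqueness} gives $|K_\lambda|\le 2$, hence by \cite[Proposition~5]{GG17} the ground state $\mathcal{G}_\lambda$ decomposes into \emph{finitely many} orbits $\mathcal{G}_\lambda(u)$, and then \cite[Theorem~1.3]{GG17} upgrades the set-stability of $\mathcal{G}_\lambda$ to stability of each orbit separately. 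This upgrade is not automatic: stability of the full set $\mathcal{G}_\lambda$ does not prevent a solution starting near one orbit from drifting toward a different orbit inside $\mathcal{G}_\lambda$, and ruling that out is exactly why the multiplicity bound, and hence the whole analysis of $\lambda(\omega)$ behind Theorem~\ref{thm.uniqueness}, is needed here. Your concentration-compactness argument on minimizing sequences can at best reproduce Theorem~\ref{thm.stability.gs}; it cannot yield the orbit-wise conclusion, so the key idea of the paper's proof is missing.

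Even judged purely as a proof of the literal statement, your plan is either redundant or incomplete. If you invoke \cite[Theorem~1.1]{GG17} after checking (G1), (G2a), (G2b) --- which \S~\ref{sect.stability.gs} already does for Theorem~\ref{thm.stability.gs} --- the statement is immediate and no further work (in particular no sub-additivity argument) is required, since the concentration-compactness machinery is internal to the cited theorem. If instead you run a self-contained Cazenave--Lions argument, then the strict sub-additivity of $\lambda\mapsto\inf_{S(\lambda)}E$, which you yourself identify as the main obstacle, is left as a strategy rather than a proof; for the implicitly defined $G$ of \eqref{eq.G}--\eqref{eq.V} this is precisely the non-trivial step, and your sketch (monotonicity and convexity of $V$ plus a rescaling comparison) is not carried out. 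Note also a mislabeling of the hypotheses of \cite{GG17}: (G1) is merely the existence of $s_0$ with $G(s_0)<0$, and (G2a), (G2b) are growth bounds on $G'$ and a lower bound on $G$ near zero; none of them is a coercivity or sub-additivity condition, so attaching those roles to them misrepresents the structure of the cited result.
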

\subsubsection*{Non-degeneracy}
We set \(S_r (\lambda) := S(\lambda)\cap H^1 _r \).
\begin{definition}
A critical point is non-degenerate if and only if the Hessian of \(E\) restricted to \(S_r (\lambda)\) is invertible. 
\end{definition}
If \(R\in S_r (\lambda)\) is a critical point of \(E\) constrained
to \(S_r (\lambda)\), then there exists \(\omega > 0\) such that
\begin{linenomath}
\begin{equation}
  \label{eq.elliptic}
  R''(x) - G'(R(x)) - \omega R(x) = 0.
  \end{equation}
  \end{linenomath}
Following the steps of \cite[\S~3]{GG17}, one can show that if \(R\) is a degenerate critical point, then there
exists $ d $ in $ \mathbb{R} $ and $ f $ in $ C^2\cap H^1 _r $
such that
\begin{linenomath}
\begin{equation*}
  L_+ (f) := -f'' + (G''(R(x)) + \omega)f = dR,\quad
  (f,R)_2 = 0.
\end{equation*}
\end{linenomath}
Examples of degenerate and non-degenerate standing-waves for
the non-linear Schr\"odinger equation 
already exist in the pure-power case $ G(s) = -as^6 $ given
in \cite{Wei85}, which provides an explicit construction of a solution
of such a function \(f\) in \cite[Proof~of~Proposition~2.10]{Wei85}.
For \(p < 6\), all the critical points are non-degenerate. 
However, in the mass-critical case \(p = 6\), the function \(E\) is not bounded below on \(S(\lambda)\) or the 
minimum is not achieved. This fact follows by applying a conformal rescaling, \cite[Proof~of~Lemma~3.1]{GG17}. Therefore, 
such \(R\) is not a minimum. The next result classifies the degeneracy of critical points and show that for some non-linearities and proves that degenerate \textsl{minima} do exist. 
\begin{thm}[Non-degeneracy]
\label{thm.nondeg}
Under the assumptions \eqref{eq.G}, \eqref{eq.V} and \eqref{eq.C},
\begin{enumerate}[(i)]
\itemsep=0.3em
\item if \(0 < \sigma\neq 2\), then minima of \(E\) over \(S_r (\lambda)\) are non-degenerate
for every \(\lambda > 0\);
\item if \(\sigma = 2\), there exists a unique level \(\lambda_d\) such that 
all the minima of \(E\) on \(S_r (\lambda_d)\) are degenerate.
\end{enumerate}
\end{thm}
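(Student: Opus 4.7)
The plan is the standard Vakhitov--Kolokolov-type reduction: show non-degeneracy of a positive symmetric minimizer $R_\omega$ is equivalent to $\lambda'(\omega)\neq 0$, where $\omega\mapsto\lambda(\omega):=\|R_\omega\|_2^2$ parameterizes the branch of positive symmetric solutions of \eqref{eq.elliptic} by its Lagrange multiplier, and then exploit the algebraic form \eqref{eq.V} to compute $\lambda(\omega)$ in closed form, reducing the theorem to an elementary discriminant analysis.

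First, I differentiate \eqref{eq.elliptic} in $\omega$ along the symmetric branch to obtain $L_+(\partial_\omega R_\omega)=-R_\omega$. Sturm oscillation theory, applied to $L_+$ with the positive symmetric profile $R_\omega$, gives $\ker L_+\cap H^1(\mathbb{R})=\mathrm{span}(R'_\omega)$, a subspace of odd functions, so $L_+$ is injective on $H^1_r$. Any non-trivial $H^1_r$-solution of $L_+f=dR_\omega$ therefore has the form $f=-d\,\partial_\omega R_\omega$, and the orthogonality $(f,R_\omega)_2=0$ collapses to $d\lambda'(\omega)=0$; hence $R_\omega$ is degenerate if and only if $\lambda'(\omega)=0$.

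Next, multiplying \eqref{eq.elliptic} by $R'$ and integrating yields the first integral $(R')^2=R^2(\omega-V(R))$, so that the peak $R_0$ satisfies $V(R_0)=\omega$. Using symmetry and the substitution $R\mapsto V$ from \eqref{eq.V}, which gives $R\,dR=\tfrac12(3aV^2+2bV+c)\,dV$, together with the Beta-function integrals $\int_0^\omega V^k/\sqrt{\omega-V}\,dV$ for $k=0,1,2$, one obtains
\begin{linenomath}
\begin{equation*}
\lambda(\omega)=\int_0^\omega\frac{3aV^2+2bV+c}{\sqrt{\omega-V}}\,dV=\frac{16a}{5}\omega^{5/2}+\frac{8b}{3}\omega^{3/2}+2c\,\omega^{1/2},
\end{equation*}
\end{linenomath}
so that $\lambda'(\omega)=\omega^{-1/2}(8a\omega^2+4b\omega+c)$. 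The sign of $\lambda'$ on $(0,\infty)$ is controlled by the quadratic $q(\omega)=8a\omega^2+4b\omega+c$, whose discriminant equals $16ac(\sigma-2)$.

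The case analysis is then elementary. If $\sigma<2$, then $q>0$ on all of $\mathbb{R}$, so $\lambda'>0$ and every minimum is non-degenerate. If $\sigma>2$ with $b\geq 0$, the real roots of $q$ are non-positive and $\lambda'>0$ on $(0,\infty)$. If $\sigma>2$ with $b<0$, $q$ has two positive roots $\omega_-<\omega_+$ and $\lambda$ is non-monotone; here the Vakhitov--Kolokolov criterion compatible with Theorem~\ref{thm.stability}, combined with the classification in Theorem~\ref{thm.uniqueness} (minima jump directly between the two monotone branches at the unique level $\lambda_2$, never settling on the middle branch or at a turning point), forces any minimizer to have $\omega\in(0,\omega_-)\cup(\omega_+,\infty)$, hence $\lambda'(\omega)>0$. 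Finally, for $\sigma=2$, $q$ has the double root $\omega_d=-b/(4a)$, which lies in $(0,\infty)$ exactly when $b<0$; then $\lambda'(\omega)=8a\omega^{-1/2}(\omega-\omega_d)^2$ vanishes only at $\omega_d$, so $\lambda$ is strictly increasing and $\lambda_d:=\lambda(\omega_d)$ is the unique level at which minima are degenerate. The main obstacle is the $\sigma>2$ case, where $\lambda$ is genuinely non-monotone and one must rule out a priori that a minimum sits at a turning point $\omega_\pm$; this step relies essentially on the stability and uniqueness theorems established earlier in the paper.
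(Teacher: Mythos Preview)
Your proposal is correct and follows essentially the same route as the paper: reduce degeneracy to the Vakhitov--Kolokolov condition $\lambda'(\omega)=0$, use the explicit formula $\lambda'(\omega)=\omega^{-1/2}(8a\omega^2+4b\omega+c)$ (which the paper derives via repeated integration by parts in Proposition~\ref{prop.lambda}, whereas you obtain it more directly through the substitution $s\mapsto V$ and Beta integrals), and then combine the discriminant $16ac(\sigma-2)$ with the energy comparisons from the proof of Theorem~\ref{thm.uniqueness} to rule out minimizers at the turning points when $2<\sigma<3$. The only noteworthy difference is that you spell out the VK reduction via $L_+(\partial_\omega R_\omega)=-R_\omega$ and Sturm theory (the paper simply cites \cite{GG17}) and you are slightly more careful in separating the sub-case $b\geq 0$, which the paper leaves implicit.
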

In double or triple powers, \cite{GG17,GG19} and \cite{LTZ21}, phenomena, as degenerate minima or
multiplicity of minima with the same mass do not appear. In \cite{BS19}, it has been proved the existence of multiple critical points for the coupled non-linear Schr\"odinger equation. 
In \cite{Gar20}, uniqueness, stability and non-degeneracy have been studied for a non-linear Klein-Gordon equation; examples of degenerate minima and constraints where the energy functional has more than one minimum, have been shown. In the quoted reference, the non-linearity is a double power. None of the solutions to \eqref{eq.C} and \eqref{eq.G} and \eqref{eq.V} can
be written as a finite linear combination of pure powers. From \eqref{eq.G} and \eqref{eq.V}, it follows that 
the function \(G\) is also an algebraic. For technical reasons, Proposition~\ref{prop.lambda}, we put more emphasis
on \eqref{eq.V}.

The proof of the Theorem~\ref{thm.uniqueness}, Theorem~\ref{thm.nondeg} and Theorem~\ref{thm.stability} rely on the study of the function 
\begin{linenomath}
\begin{equation}
\label{eq.lambda.def}
\lambda(\omega) := \|R_\omega\|_{L^2}^2,
\end{equation}
\end{linenomath}
where \(R_\omega\) is the unique positive and symmetric function in \(H^1\) satisfying
\eqref{eq.elliptic}. In \S~\ref{sect.lambda}, we calculate the function \(\lambda\) for non-linearities satisfying
\eqref{eq.G} and \eqref{eq.V}. The next three sections address the proof of the theorems listed above: we prove Theorem~\ref{thm.stability.gs}, Theorem~\ref{thm.uniqueness} and Theorem~\ref{thm.stability} in \S~\ref{sect.stability.gs},
and Theorem~\ref{thm.nondeg} in \S~\ref{thm.nondeg}.
\section{Preliminary facts and notations}
\label{sect.lambda} The next results justify the introduction of the function \(V\) and the reason why our assumptions are expressed
in terms of \(V\) rather than the non-linearity \(G\).
\begin{prop}
\label{prop.ber-lio-str}
Suppose that \(V(0) = 0\). For every \(\omega\in (0,\sup(V))\),
there exists a unique solution \(R_\omega > 0\) to \eqref{eq.elliptic} in 
\(H^1_{r}(\mathbb{R})\) such that
\begin{enumerate}[(i)]
\itemsep=0.3em
\item \(R_\omega\) is positive and strictly symmetric-decreasing;
\item \(R_\omega (0) = \inf\{s > 0\mid V(s) = \omega\}\);
\item the map \(\phi\colon J\to H^1 _r\) defined as \(\phi(\omega) = R_\omega\) is \(C^1(J,H^1 _r(\mathbb{R};\mathbb{C}))\)
for every open interval \(J\subseteq (0,\sup(V))\).
\end{enumerate}
\end{prop}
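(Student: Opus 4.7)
The plan is to view \eqref{eq.elliptic} as a second-order ODE on the real line and exploit the first integral obtained by multiplication by $R'$. Any $H^1_r$-solution decays at infinity, so the conserved quantity vanishes identically:
$$\frac{(R'(x))^2}{2} - G(R(x)) - \frac{\omega R(x)^2}{2} = 0.$$
Substituting $G(s) = -s^2 V(s)/2$ yields the first-order relation
$$(R'(x))^2 = R(x)^2\bigl(\omega - V(R(x))\bigr).$$
A strictly symmetric-decreasing profile must have $R'(0) = 0$, forcing $V(R(0)) = \omega$. Since $V$ is the branch of \eqref{eq.V} with $V(0) = 0$ and $c > 0$ in \eqref{eq.C}, the expansion $V(s) \sim s^2/c$ near $0$ shows that the smallest positive root $s_0 := \inf\{s > 0 \mid V(s) = \omega\}$ is well-defined and equals $R(0)$; this already proves (ii).

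For existence I would run a shooting argument from the initial data $R(0) = s_0$, $R'(0) = 0$. Local $C^2$ well-posedness follows from the local Lipschitz character of $G'$ referenced in the introduction. The conservation law traps $R$ in $[0, s_0]$ and gives strict monotonicity on $(0,+\infty)$; for small $R$ the equation reduces to $R'' \approx \omega R$, giving exponential decay and extendability to $[0,+\infty)$. Even reflection yields $R_\omega \in H^1_r$ with the properties in (i). Uniqueness is then immediate: any positive symmetric $H^1_r$-solution has $R'(0) = 0$ by symmetry, inherits the conservation law with constant zero from decay at infinity, hence shares the Cauchy data $(s_0, 0)$, and Cauchy--Lipschitz uniqueness finishes the argument.

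For (iii) I would combine the implicit function theorem applied to the algebraic equation $V(s_0) = \omega$ (the condition $\omega < \sup V$ together with the strict monotonicity of the chosen branch guarantees $V'(s_0) \neq 0$, so $\omega \mapsto s_0(\omega)$ is $C^1$) with smooth dependence of Cauchy-problem solutions on initial data and parameters. Concretely, one would differentiate in $\omega$ the integrated first-order representation
$$x = \int_{R}^{s_0(\omega)}\frac{dr}{r\sqrt{\omega - V(r)}},$$
viewing $R$ as an implicit function of $(x,\omega)$, together with the symmetric extension to $x < 0$.

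The main obstacle will be upgrading $C^1$-dependence in $\omega$ from a pointwise statement to convergence in the $H^1_r$-norm. The integrand above has a square-root singularity at the moving upper endpoint $s_0(\omega)$, which would propagate into any formal $\omega$-derivative. A change of variable of the form $r = s_0(\omega)(1 - u^2)$ desingularises the integral uniformly in $\omega$ on compact subintervals of $J$; combined with uniform exponential decay bounds for $R_\omega(x)$ coming from the small-$R$ analysis, this permits differentiation under the integral sign and delivers the $C^1$ regularity of $\phi$ in the $H^1_r$-topology claimed in (iii).
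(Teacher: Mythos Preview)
Your approach differs substantially from the paper's. The paper does not argue directly: it cites Berestycki--Lions \cite{BL83a} for (i) and (ii), adds a one-line remark that $H^1(\mathbb{R})\subset L^\infty(\mathbb{R})$ together with \eqref{eq.elliptic} forces decay at infinity (so the hypotheses of \cite{BL83a} are met), and cites \cite{Gar23,Sha83} for the $C^1$ dependence in (iii). Your route is a self-contained ODE analysis via the first integral, a shooting construction, and an implicit-function argument on the integrated representation. What you gain is independence from the literature and an explicit mechanism for (iii); what the paper gains is brevity and a result that applies to any admissible $V$, not just the branch coming from \eqref{eq.V}--\eqref{eq.C}, which you tacitly invoke (e.g.\ the expansion $V(s)\sim s^2/c$ and the positivity $V'>0$).

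There is one genuine gap in your uniqueness step. From $R'(0)=0$ and the conservation law you correctly get $V(R(0))=\omega$, but you then assert ``hence shares the Cauchy data $(s_0,0)$'' without ruling out that $R(0)$ is a \emph{larger} root of $V(\cdot)=\omega$. The missing argument is short: if some $s_1\in(0,R(0))$ also satisfied $V(s_1)=\omega$, then since $R$ is continuous and decreases to $0$, there would be $x_1>0$ with $R(x_1)=s_1$; the conservation law forces $R'(x_1)=0$, and Cauchy--Lipschitz uniqueness for the second-order equation makes $R$ symmetric about $x_1$ as well as about $0$, hence periodic, contradicting $R\in H^1$. You should insert this before invoking ODE uniqueness. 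With that fix, your sketch for (i)--(ii) is sound, and your plan for (iii), including the desingularising substitution near $s_0(\omega)$ and the uniform exponential tails, is a reasonable blueprint, though considerably more laborious than the paper's appeal to \cite{Gar23,Sha83}.
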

The proof of (i) and (ii) follows from results and remarks of \cite[Theorem~5]{BL83a}. In their assumptions, \(R_\omega\) 
vanishes at infinity.
In our case, this fact follows from the inclusion \(H^1 (\mathbb{R})\subseteq L^{\infty}(\mathbb{R})\) and \eqref{eq.elliptic} which gives \(R_\omega\in W^{2,\infty}\). Therefore, \(\lim_{|x|\to\infty} R_\omega(x) = 0\); (iii) follows from 
\cite[Proposition~6]{Gar23}
and \cite{Sha83}.
\begin{prop}
\label{prop.branch}
If \(\sigma < 3\) and \(c > 0\), there is a unique \(C^2\) function \(V\) satisfying
\eqref{eq.V} and defined on \([0,+\infty)\) such that 
\begin{enumerate}[(i)]
\itemsep=0.3em
\item \(V'(s) > 0\) if \(s > 0\);
\item \(\lim_{s\to\infty} V(s) = +\infty\).
\end{enumerate}
\end{prop}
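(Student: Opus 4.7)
The plan is to reduce the whole statement to an analysis of the polynomial
\begin{linenomath}
\begin{equation*}
F(V) := aV^3 + bV^2 + cV,
\end{equation*}
\end{linenomath}
because \eqref{eq.V} is equivalent to $F(V(s)) = s^2$. First I would observe that $F'(V) = 3aV^2 + 2bV + c$ is a quadratic in $V$ whose discriminant is $4b^2 - 12ac = 4ac(\sigma - 3)$. The hypothesis \eqref{eq.C} gives $ac>0$ and $\sigma<3$, so this discriminant is strictly negative; combined with $F'(0)=c>0$, this forces $F'(V)>0$ for every $V\in\mathbb{R}$. Hence $F$ is a strictly increasing $C^{\infty}$ bijection of $\mathbb{R}$ onto $\mathbb{R}$, and in particular a bijection of $[0,+\infty)$ onto $[0,+\infty)$ with $F(0)=0$.

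Next I would define
\begin{linenomath}
\begin{equation*}
V(s) := F^{-1}(s^2),\qquad s\in[0,+\infty).
\end{equation*}
\end{linenomath}
Since $F^{-1}$ is $C^{\infty}$ on all of $\mathbb{R}$ by the inverse function theorem (its derivative $1/F'$ never vanishes), and since $s\mapsto s^2$ is $C^{\infty}$, the map $V$ is $C^{\infty}$ on $[0,+\infty)$, so in particular $C^2$ as required. Evidently $V(0)=F^{-1}(0)=0$ and $F(V(s))=s^2$, verifying \eqref{eq.V}. Differentiating the identity $F(V(s))=s^2$ yields
\begin{linenomath}
\begin{equation*}
V'(s) = \frac{2s}{F'(V(s))},
\end{equation*}
\end{linenomath}
which is strictly positive for $s>0$ since $F'>0$ everywhere, giving (i).

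For (ii), strict monotonicity of $F^{-1}$ together with $F(V)\to+\infty$ as $V\to+\infty$ (because $a>0$) gives $V(s)=F^{-1}(s^2)\to+\infty$ as $s\to+\infty$. Finally, for uniqueness, if $\widetilde V$ is any continuous function on $[0,+\infty)$ satisfying \eqref{eq.V} with $\widetilde V(0)=0$, then $F(\widetilde V(s))=s^2=F(V(s))$ for every $s\ge 0$, and the injectivity of $F$ forces $\widetilde V\equiv V$. (Properties (i)--(ii) are not needed for uniqueness, which is already a consequence of $F$ being one-to-one.)

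There is no real obstacle here; the only point that needs care is the sign of the discriminant of $F'$, which is exactly the content of the strict bound $\sigma<3$. The boundary case $\sigma=3$ would still give $F'\ge0$ with a double zero, but then $V$ would fail to be $C^2$ across that value of $V$, which is precisely why \eqref{eq.C} is stated with a strict inequality.
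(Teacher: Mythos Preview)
Your proof is correct and hinges on the same key observation as the paper's: the discriminant of $F'(V)=3aV^2+2bV+c$ equals $4ac(\sigma-3)<0$, so $F'>0$ on all of $\mathbb{R}$. The difference lies in how this fact is exploited. The paper applies the implicit function theorem locally near the origin to obtain a branch $\varphi$ solving $x=a\varphi^3+b\varphi^2+c\varphi$, and then runs a maximality argument (boundedness of $\varphi$ on a finite interval, existence of the limit at the right endpoint, re-application of the implicit function theorem there) to extend $\varphi$ to $(-\varepsilon,+\infty)$, finally setting $V(s)=\varphi(s^2)$. You bypass the extension step entirely: once $F'>0$ everywhere, $F$ is a global $C^\infty$ diffeomorphism of $\mathbb{R}$, and $V:=F^{-1}(s^2)$ is immediately well defined and smooth on $[0,+\infty)$. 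This is more direct, and it also makes the uniqueness claim transparent via the injectivity of $F$, a point the paper's proof does not spell out. One small caveat: your closing remark on the borderline case $\sigma=3$ is not sharp in general, since whether the critical value $V=-b/(3a)$ is actually attained by $V$ on $[0,+\infty)$ depends on the sign of $b$; but this is a side comment and does not affect the main argument.
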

\begin{proof}
(i). We apply the Implicit Function Theorem to the \(F\colon\mathbb{R}\times\mathbb{R}\to\mathbb{R}\)
defined as \(F(x,y) = x - ay^3 - by^2 - cy\). Since \(\partial_x {F}(0,0) = 1\neq 0\), there exists \(\varepsilon > 0\)
and a smooth function \(\varphi\colon(-\varepsilon,\varepsilon)\to\mathbb{R}\) such that \(F(x,\varphi(x)) = 0\)
for every \(x\in (-\varepsilon,\varepsilon)\). We claim that \(\varphi\) can be extended to \((-\varepsilon,+\infty)\) 
as a continuously differentiable function. 
On the contrary, let 
\begin{linenomath}
\begin{equation}
\label{eq.branch.1}
\varphi\colon(-\varepsilon,\ell)\to\R
\end{equation}
\end{linenomath}
be the maximal interval of existence of a solution. 

Since 
\(x = \varphi(x)(a\varphi(x)^2 + b\varphi(x) + c)\) and \(\sigma < 4\), \(\varphi > 0\) on \((0,\ell)\).
Since \(\sigma < 3\), we have \(\partial_y F(x,y) < 0\). Since \(\varphi > 0\) on \((0,\ell)\), from 
\begin{linenomath}
\begin{equation}
\label{eq.ift}
\varphi'(x) = -\frac{\partial_y F(x,\varphi(x))}{\partial_x F(x,\varphi(x))}
\end{equation}
\end{linenomath}
we obtain \(\varphi' > 0\) on \((0,\ell)\). \(\varphi\) is also bounded as
\(F(x,\varphi(x)) = 0\), implies \(|a\varphi^3(x) + b\varphi^2(x) + c\varphi(x)|\leq\ell\) for every 
\(x\in (0,l)\). Since \(\varphi\) is strictly increasing, there exists the limit \(\lim_{x\to\ell}\varphi(x) = L\). Since
\(\partial_x F(u,L)\neq 0\), there exists \(\varepsilon_0 > 0\) and 
\(\psi\colon (\ell - \varepsilon_0,\ell +\varepsilon_0)\to\mathbb{R}\)
smooth such that \(F(x,\psi(x)) = 0\) and \(\phi(\ell) = L\). The function \(\chi\colon (0,\ell + \varepsilon_0)\to\mathbb{R}\) 
such that \(\chi(x) = \varphi(x)\) if \(x\in (0,\ell)\) and \(\chi(x) = \psi(x)\) if \(x\in [\ell,\ell + \varepsilon_0)\) 
is a smooth extension of \(\varphi\), contradicting the maximality of \((0,\ell)\).

The function \(\varphi\colon (-\varepsilon,+\infty)\) is \(C^2\), as \(F\in C^2 (\mathbb{R}^2)\), and strictly increasing
from \eqref{eq.ift}. We define \(V\colon [0,+\infty)\to\mathbb{R}\) as \(V(s) := \varphi(s^2)\). If \(s > 0\), then \(V'(s) = 2s\varphi'(s^2) > 0\).

(ii). From \eqref{eq.V}, \(\lim_{s\to\infty} V(s) = +\infty\). 
\end{proof}
Figure~\ref{fig.sigma-subcritical} and \ref{fig.sigma-supercritical} illustrate the behaviour of \(V\) when \(\sigma < 3\). 
\begin{center}
\begin{figure}[ht!]
\centering\includegraphics[scale=0.6]{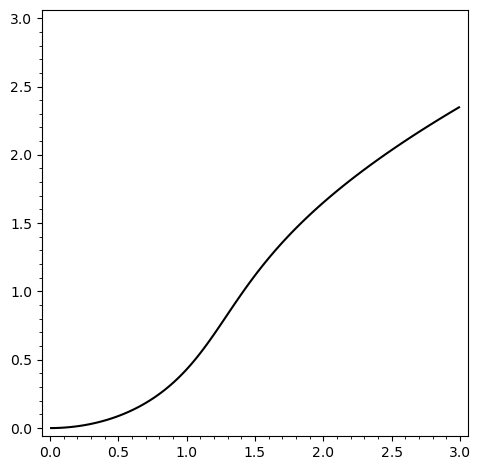}
\caption{\(a = 1\), \(b = -2\) and \(c = 3\). \(\sigma = 4/3 < 3\). \(V\) is defined everywhere.}
\label{fig.sigma-subcritical}
\end{figure}
\end{center}
\begin{prop}
\label{prop.lambda}
If \eqref{eq.C}, \eqref{eq.V} and \eqref{eq.G} hold, then \(\omega^{-1/2}\lambda'(\omega)\) is as a third degree polynomial function on \(\omega\).
\end{prop}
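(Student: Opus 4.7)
The proof will compute $\lambda(\omega)$ in closed form via a Pohozaev-type quadrature and two explicit substitutions.

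First, I multiply \eqref{eq.elliptic} by $R_\omega'$ and integrate; the decay at infinity from Proposition~\ref{prop.ber-lio-str} kills the boundary constant, and using \eqref{eq.G} this produces the first-order identity
\begin{equation*}
(R_\omega')^2 = 2G(R_\omega) + \omega R_\omega^2 = R_\omega^2\bigl(\omega - V(R_\omega)\bigr).
\end{equation*}
Setting $x=0$ recovers $V(R_\omega(0)) = \omega$, consistent with Proposition~\ref{prop.ber-lio-str}(ii). Because $R_\omega$ is strictly symmetric-decreasing on $\mathbb{R}$, one has $R_\omega'(x) = -R_\omega(x)\sqrt{\omega - V(R_\omega(x))}$ for $x>0$, so using $r = R_\omega(x)$ as the integration variable converts the mass into
\begin{equation*}
\lambda(\omega) = 2\int_0^{R_\omega(0)} \frac{r\,dr}{\sqrt{\omega - V(r)}}.
\end{equation*}

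Next, I change variables from $r$ to $V$ using the differential form of \eqref{eq.V}, namely $2r\,dr = (3aV^2 + 2bV + c)\,dV$; Proposition~\ref{prop.branch} guarantees that $V$ is a $C^1$ bijection from $[0,R_\omega(0)]$ onto $[0,\omega]$, so this substitution is legitimate and yields
\begin{equation*}
\lambda(\omega) = \int_0^{\omega} \frac{3aV^2 + 2bV + c}{\sqrt{\omega - V}}\,dV.
\end{equation*}
A final substitution $u = \omega - V$ then reduces the integral to a linear combination of $\int_0^\omega u^{k-1/2}\,du = \tfrac{2}{2k+1}\omega^{k+1/2}$ for $k = 0, 1, 2$, giving $\lambda(\omega)$ as an explicit linear combination of $\omega^{1/2}$, $\omega^{3/2}$ and $\omega^{5/2}$ with coefficients expressible in $a,b,c$.

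Differentiating in $\omega$ and multiplying by the indicated power of $\omega$ produces the announced polynomial expression, whose cubic structure is forced by the quadratic numerator $3aV^2 + 2bV + c$ in the integrand. The only genuine obstacle is verifying that the two substitutions are honest $C^1$ diffeomorphisms on the stated intervals, but this is immediate from the strict monotonicity of $R_\omega$ (Proposition~\ref{prop.ber-lio-str}(i)) and of $V$ (Proposition~\ref{prop.branch}(i)); once that is in hand the rest of the argument is a routine algebraic computation.
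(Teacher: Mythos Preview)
Your argument is correct and in fact cleaner than the paper's. Both proofs begin identically: derive the first integral $(R_\omega')^2 = R_\omega^2(\omega - V(R_\omega))$ and substitute $r = R_\omega(x)$ to reach
\[
\lambda(\omega) = 2\int_0^{T(\omega)} \frac{r\,dr}{\sqrt{\omega - V(r)}}.
\]
From here the paper proceeds by two rounds of integration by parts against $(\sqrt{\omega - V})'$, introducing the auxiliary quantities $s/V'(s)$ and $g(s) = \tfrac{1}{V'}\tfrac{d}{ds}(s/V')$, then invokes the algebraic identity $g = 3aV + b$ (which ultimately encodes \eqref{eq.V}) and differentiates in $\omega$ \emph{before} the last integral is evaluated. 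Your direct change of variable $r \mapsto V$ via $2r\,dr = (3aV^2 + 2bV + c)\,dV$ bypasses all of this: it produces an elementary integral in one step, and the substitution $u = \omega - V$ gives $\lambda(\omega) = \tfrac{16a}{5}\omega^{5/2} + \tfrac{8b}{3}\omega^{3/2} + 2c\omega^{1/2}$ immediately, matching the paper's \eqref{eq.lp.3}. What the paper's route buys is a template (the function $g$) that could in principle be iterated for higher-degree implicit relations; what your route buys is transparency and brevity for the cubic case at hand.

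One small remark: the polynomial you (and the statement) describe as cubic is actually quadratic, $\omega^{1/2}\lambda'(\omega) = 8a\omega^2 + 4b\omega + c$; the paper's own proof ends with this degree-two expression as well, so the ``third degree'' in the statement is a typo rather than a flaw in your reasoning.
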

\begin{proof}
We derive an explicit expression of \(\lambda\) using the approached devised in \cite{IK93}, and adopted in \cite{GG17,GG19}.
Multiplying \eqref{eq.elliptic} by \(R_\omega\) and integrating, we obtain 
\begin{linenomath}
\begin{equation*}
R_\omega'(x)^2 - 2G(R_\omega(x)) - \omega R_\omega(x)^2\equiv C
\end{equation*}
\end{linenomath}
for some \(C\) not depending on \(x\). From the remarks following Proposition~\ref{prop.ber-lio-str}, \(C = 0\). 
From the equality above, (i) of Proposition~\ref{prop.ber-lio-str} and \eqref{eq.G} follows
\begin{linenomath}
\begin{equation}
\label{eq.lambda.2}
R_\omega'(x) = -\sqrt{2G(R_\omega(x))+\omega R_\omega^2(x)} =  
-R_\omega (x)\sqrt{\omega - V(R_\omega(x))}.
\end{equation}
\end{linenomath}
From Proposition~\ref{prop.branch}, \(V\colon [0,+\infty)\to [0,+\infty)\) is a surjective function. 
Let \(T\colon [0,+\infty)\to [0,+\infty)\) be the inverse of \(V\). Therefore,
\begin{linenomath}
\begin{equation*}
V(T(\omega)) = \omega.
\end{equation*}
\end{linenomath}
From the definition of \(\lambda\) in \eqref{eq.lambda.def}, (i) of Proposition~\ref{prop.ber-lio-str} and
\eqref{eq.lambda.2}, we have
\begin{linenomath}
\begin{equation*}
\begin{split}
\lambda(\omega) = \int_{-\infty}^{+\infty} R_\omega(x)^2 dx = 
2\int_{0}^{+\infty} R_\omega(x)^2 dx = 2\int_{0}^{+\infty} \frac{R_\omega(x)^2 R_\omega'(x)}{R_\omega'(x)} dx.
\end{split}
\end{equation*}
\end{linenomath}
From (i) of Proposition~\ref{prop.ber-lio-str}, \(R'\) vanishes only at \(x = 0\). Therefore, the integral above is well defined. From \eqref{eq.lambda.2} the last
term of the equality above is equal to
\begin{linenomath}
\begin{equation*}
-2\int_{0}^{+\infty} \frac{R_\omega(x)^2 R_\omega'(x)dx}{-R_\omega (x)\sqrt{\omega - V(R_\omega(x))}}
= -2\int_{0}^{+\infty} \frac{R_\omega(x) R_\omega'(x)dx}{\sqrt{\omega - V(R_\omega(x))}}.
\end{equation*}
\end{linenomath}
From \eqref{eq.lambda.2} \(R_\omega\colon [0,+\infty)\to (0,T(\omega)]\) is strictly decreasing and surjective.
The variable change \(s = R_\omega(x)\) yields
\begin{linenomath}
\begin{equation}
\label{eq.lambda.3}
\begin{split}
2&\int_0^{T(\omega)}\frac{sds}{\sqrt{\omega - V(s)}} = 
2\int_0^{T(\omega)} \frac{s}{V'(s)}\cdot\frac{V'(s)ds}{\sqrt{\omega - V(s)}}\\
2&\int_0^{T(\omega)} \frac{s}{V'(s)}\cdot\frac{d}{ds}\left(-2\sqrt{\omega - V(s)}\right)'ds.
\end{split}
\end{equation}
\end{linenomath}
The second integral is well-defined as \(V' > 0\) almost everywhere in \([0,+\infty)\), from Proposition~\ref{prop.branch}. 
A by-parts integration gives
\begin{linenomath}
\begin{equation*}
\begin{split}
-2&\int_0^{T(\omega)} \frac{d}{ds}\left(\frac{s}{V'(s)}\right)\cdot\left(-2\sqrt{\omega - V(s)}\right)ds\\
+ 2&\left[\frac{s}{V'(s)}\cdot -2
\sqrt{\omega - V(s)}\right]_0^{T(\omega)}\\
= 4&\int_0^{T(\omega)} \frac{d}{ds}\left(\frac{s}{V'(s)}\right)\cdot\sqrt{\omega - V(s)}ds + 2c\sqrt{\omega}.
\end{split}
\end{equation*}
\end{linenomath}
Taking the derivative with respect to \(s\) in \eqref{eq.implicit.1}, we obtain 
\begin{linenomath}
\begin{equation}
\label{eq.implicit.1}
2s = (3aV^2 + 2bV + c)V'.
\end{equation}
\end{linenomath}
On dividing by \(s\), and taking the limit, we obtain \(\lim_{s\to 0} s/V'(s) = c/2\). Therefore, the last equality follows
too.
Taking the derivative of \(\lambda\) with respect to \(\omega\), we obtain
\[
\begin{split}
\lambda'(\omega) &= 
2\int_0^{T(\omega)} \frac{d}{ds}\left(\frac{s}{V'(s)}\right)\cdot\frac{ds}{\sqrt{\omega - V(s)}} + \frac{c}{\sqrt{\omega}} \\
&= 
2\int_0^{T(\omega)} \frac{1}{V'(s)}\frac{d}{ds}\left(\frac{s}{V'(s)}\right)\cdot\frac{V'(s)ds}{\sqrt{\omega - V(s)}} + \frac{c}{\sqrt{\omega}} \\
&= -4\int_0^{T(\omega)} \frac{1}{V'(s)}\frac{d}{ds}\left(\frac{s}{V'(s)}\right)\cdot (\sqrt{\omega - V(s)})'ds + \frac{c}{\sqrt{\omega}}.
\end{split}
\]
We set 
\begin{linenomath}
\begin{equation}
\label{eq.implicit.4}
g(s) := \frac{1}{V'(s)}\frac{d}{ds}\left(\frac{s}{V'(s)}\right).
\end{equation}
\end{linenomath}
From \eqref{eq.implicit.1}, we obtain
\begin{linenomath}
\begin{equation}
\label{eq.implicit.2}
g(s) = 3aV + b.
\end{equation}
\end{linenomath}
Therefore, the limit of \(g\) as \(s\to 0\) exists, and it is equal
to \(b\). Then,
\[
\begin{split}
\lambda'(\omega) &= -4\int_0^{T(\omega)} g(s) (\sqrt{\omega - V(s)})'ds + \frac{c}{\sqrt{\omega}}\\
&= 4\int_0^{T(\omega)} g'(s)\sqrt{\omega - V(s)}ds + \frac{c}{\sqrt{\omega}} + \left[-4g(s)\sqrt{\omega - V(s)}\right]_0^{T(\omega)}\\
&= 4\int_0^{T(\omega)} g'(s)\sqrt{\omega - V(s)}ds + \frac{c}{\sqrt{\omega}} + 
\left[-4g(s)\sqrt{\omega - V(s)}\right]_0^{T(\omega)}\\
&= 4\int_0^{T(\omega)} g'(s)\sqrt{\omega - V(s)}ds + \frac{c}{\sqrt{\omega}} + 4b\sqrt{\omega}.
\end{split}
\]
Taking the derivative in \eqref{eq.implicit.2}, we obtain \(g'(s) = 3aV'\). Since
\begin{linenomath}
\begin{equation*}
\begin{split}
12a\int_0^{T(\omega)} V'\sqrt{\omega - V(s)}ds &= 
-8a\Big[(\omega - V)^{\frac{3}{2}}\Big]_0^{T(\omega)} = 
8a\omega^{\frac{3}{2}},
\end{split}
\end{equation*}
\end{linenomath}
there holds
\begin{linenomath}
\begin{equation}
\label{eq.lp.1}
\lambda'(\omega) = 8a\omega^{\frac{3}{2}} + 4b\omega^{\frac{1}{2}} + c\omega^{-\frac{1}{2}}.
\end{equation}
\end{linenomath}
Therefore, \(\omega^{-\frac{1}{2}}\lambda(\omega) = p(\omega)\), where
\begin{linenomath}
\begin{equation}
\label{eq.pol}
p = 8a X^2 + 4b X + c.
\end{equation}
\end{linenomath}
\end{proof}
\begin{rem}
Although the rightend side of \eqref{eq.lp.1} is defined on for every \(\omega\), it coincides with \(\|R_\omega\|_{L^2}^2\) 
only when \(\omega\) satisfies the assumptions of Proposition~\ref{prop.ber-lio-str}. In the case \(\sigma < 3\),
every \(\omega\) does satisfy those assumptions.
\end{rem}
\section{Stability of the ground-state and uniqueness}
\label{sect.stability.gs}
For the proof of Theorem~\ref{thm.stability.gs}, we rely on the results obtained in \cite{GG17}
which apply to more general non-linearities than those satisfying \eqref{eq.G}. We are going to
show that \(G\) satisfies assumptions (G1), (G2b) and (G4) necessary to apply \cite[Theorem~1.3]{GG17}.
Results in higher dimension have been proved under similar assumptions in \cite{BBGM07,Iko14}. 
For the sake of convenience, each of these assumptions are going to be restated before being checked.
\begin{proof}[Proof~of~Theorem~\ref{thm.stability.gs}]
\

\noindent (G1). There exists \(s_0 > 0\) such that \(G(s_0) < 0\).

\noindent From Proposition~\ref{prop.branch} and the assumption \(V(0)\) in \eqref{eq.V}, it follows that \(V(s) > 0\) for
every \(s > 0\). Therefore, from \eqref{eq.V}, \(G(s) < 0\) for every \(s > 0\).

\noindent (G2a). There are \(2 < p < q\) and \(C\) such that \(|G'(s)|\leq C(|s|^{p - 1} + |s|^{q - 1})\).

\noindent Since 
\begin{linenomath}
\begin{equation}
\label{eq.G2a.1}
3ax^2 + 2bx + c\geq (3 - \sigma)/3c
\end{equation}
\end{linenomath}
from \eqref{eq.implicit.1} we have
\begin{linenomath}
\begin{equation*}
\begin{split}
2s = |V'(s)| |3aV(s)^2 + 2V(s)b + c|\geq \frac{3 - \sigma}{3c} |V'(s)|.
\end{split}
\end{equation*}
\end{linenomath}
Therefore,
\begin{linenomath}
\begin{equation}
\label{eq.G2a.2}
|V'(s)|\leq\frac{6c}{3 - \sigma}|s|.
\end{equation}
\end{linenomath}
Integrating both the left and the righthand side of the inequality above, we obtain 
\begin{linenomath}
\begin{equation}
\label{eq.G2a.3}
|V(s)|\leq\frac{3c}{3 - \sigma} s^2.
\end{equation}
\end{linenomath}
From \eqref{eq.V},
\begin{linenomath}
\begin{equation*}
\label{eq.G2a.4}
G' = -\frac{1}{2}(2sV + s^2 V').
\end{equation*}
\end{linenomath}
Therefore
\begin{linenomath}
\begin{equation}
\label{eq.G2a}
\begin{split}
|G'(s)|&\leq |sV| + \frac{1}{2}|s|^2 |V'|\\
&\leq\frac{3c}{(3 - \sigma)}|s|^3 + \frac{3c}{(3 - \sigma)} |s|^3 = C|s|^3,\quad C := \frac{6c}{3 - \sigma}.
\end{split}
\end{equation}
\end{linenomath}
(G2b). There exists \(2 < p^* < 6\) and \(s^*\) such that for \(0 < s\leq s^*\) there holds
\(G(s)\geq -C|s|^{p^*}\)

\noindent From \eqref{eq.C}, \eqref{eq.V}
and \eqref{eq.G}, it follows that \(G(0) = 0\) which, together with \eqref{eq.G2a}, implies
\(|G(s)|\leq 3c/{2(3 - \sigma)} |s|^4\) after an integration on \([0,s]\). Therefore, (G2b) holds
with \(p^* = 4\) and every \(s^* > 0\).

The last assumption of \cite[Theorem~1.1]{GG17} is the global well-posedness of \eqref{eq.NLS} in \(H^1\).
From Proposition~\ref{prop.branch} and \eqref{eq.G}, \(G\) is \(C^2\). Therefore, \(G'\)
is locally Lipschitz. Therefore, from \cite[Theorem~3.5.1]{Caz03},
for every initial datum \(u_0\) in \(H^1 (\mathbb{R};\mathbb{C})\) there exists a unique \(T_{max}\) and  
\(\phi\colon [0,T_{max})\times\mathbb{R}\to\mathbb{C}\), solution to \eqref{eq.NLS}, such that 
\(\phi(t,\cdot)\) has regularity \(C([0,T_{max});H^1)\cap C^1([0,T_{max});L^2)\) and 
\(\phi(0,\cdot) = u_0\). From the conservation of the energy and the mass, and 
(G2b), \(\phi(t,\cdot)\) is bounded in \(H^1\). Therefore, local solutions can be extended to 
\([0,+\infty)\), showing that \eqref{eq.NLS} is globally well-posed. 
Since \(G\) satisfies assumptions (G1), (G2a) and (G2b), by \cite[Theorem~1.1]{GG17}, the set \(\mathcal{G}_\lambda\) is 
non-empty and stable for every \(0 < \lambda\). 
\end{proof}
\begin{prop}
If \(\sigma < 3\), a unique solution to \eqref{eq.elliptic} exists for every \(\omega\) in \((0,+\infty)\).
\end{prop}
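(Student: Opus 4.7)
The plan is to deduce the statement directly by combining Proposition~\ref{prop.ber-lio-str} and Proposition~\ref{prop.branch}, since almost all the substantive work has already been done. The only gap to fill is to verify that under the assumption $\sigma < 3$ the supremum of $V$ on its domain of definition is $+\infty$, so that the interval $(0,\sup(V))$ of admissible frequencies for Proposition~\ref{prop.ber-lio-str} coincides with $(0,+\infty)$.

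First I would invoke Proposition~\ref{prop.branch}: under \eqref{eq.C} (in particular $\sigma<3$ and $c>0$) there is a unique $C^2$ function $V\colon[0,+\infty)\to\mathbb{R}$ solving the implicit equation \eqref{eq.V} with $V(0)=0$, and this $V$ satisfies $V'(s)>0$ for $s>0$ and $\lim_{s\to\infty}V(s)=+\infty$. Combined with $V(0)=0$ and continuity, this gives that $V$ maps $[0,+\infty)$ bijectively and monotonically onto $[0,+\infty)$; in particular $\sup(V)=+\infty$.

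Next I would apply Proposition~\ref{prop.ber-lio-str}, whose hypotheses $V(0)=0$ are now in force: for every $\omega\in(0,\sup(V))=(0,+\infty)$ there exists a unique $R_\omega\in H^1_r(\mathbb{R})$, positive and strictly symmetric-decreasing, solving \eqref{eq.elliptic}. This is exactly the conclusion sought.

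There is no real obstacle: the proposition is essentially a bookkeeping statement that identifies, in the regime $\sigma<3$, the admissible interval of frequencies used in Proposition~\ref{prop.ber-lio-str} with the full positive half-line. The only thing one must be careful about is that the uniqueness is uniqueness within the class of positive, radial (symmetric) $H^1$ solutions; the citation to \cite{BL83a} underlying Proposition~\ref{prop.ber-lio-str}(i)--(ii) already provides this, so no additional argument is required.
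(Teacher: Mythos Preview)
Your proposal is correct and follows essentially the same approach as the paper: invoke Proposition~\ref{prop.branch} to see that \(V\) is strictly increasing with \(V(0)=0\) and \(\lim_{s\to\infty}V(s)=+\infty\), so \(\sup(V)=+\infty\), and then apply Proposition~\ref{prop.ber-lio-str} on the full interval \((0,+\infty)\). Your write-up is in fact slightly more explicit about the uniqueness aspect than the paper's own proof.
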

\begin{proof}
From \eqref{eq.V}, \(V(0) = 0\). From Proposition~\ref{prop.branch}, \(V\) is strictly increasing
and diverging. Therefore, \(V\colon [0,+\infty)\to [0,+\infty)\) is surjective.
Therefore, for every \(\omega > 0\), 
there exists a positive, symmetric-decreasing solution to \eqref{eq.elliptic}, by Proposition~\ref{prop.ber-lio-str}.
\end{proof}
\begin{prop}
\label{prop.limit} 
\(\lim_{\omega\to 0} \lambda(\omega) = 0\).
\end{prop}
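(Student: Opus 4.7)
The plan is to leverage the integral representation of $\lambda(\omega)$ derived during the proof of Proposition~\ref{prop.lambda}, namely
$$\lambda(\omega) = 2\int_0^{T(\omega)} \frac{s\, ds}{\sqrt{\omega - V(s)}}.$$
The central observation is that rewriting the implicit identity \eqref{eq.implicit.1} as $s/V'(s) = (3aV(s)^2 + 2bV(s) + c)/2$ shows that the ratio $s/V'(s)$ is a continuous, bounded function of $s$ on $[0,T(\omega)]$ whose value at $s=0$ is $c/2$. Since $V(s)\in[0,\omega]$ on that interval, one has the uniform bound $s/V'(s)\leq M_\omega := (3a\omega^2 + 2|b|\omega + c)/2$, and $M_\omega\to c/2$ as $\omega\to 0^+$.

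First, I would split off a factor of $V'(s)$ in the integrand so as to make the substitution $u = \omega - V(s)$ transparent:
$$\lambda(\omega) = 2\int_0^{T(\omega)} \frac{s}{V'(s)} \cdot \frac{V'(s)\, ds}{\sqrt{\omega - V(s)}} \leq 2M_\omega \int_0^{T(\omega)} \frac{V'(s)\, ds}{\sqrt{\omega - V(s)}} = 2M_\omega\cdot 2\sqrt{\omega} = 4M_\omega\sqrt{\omega},$$
where the middle integral is evaluated exactly via the antiderivative $-2\sqrt{\omega - V(s)}$. Since $M_\omega$ stays bounded as $\omega\to 0^+$, this yields $\lambda(\omega) = \mathrm{O}(\sqrt{\omega})$ and the desired limit follows.

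If one prefers a sharper conclusion, the substitution $u = \omega - V(s)$ (so $du = -V'(s)\,ds$ and $s/V'(s) = [3a(\omega-u)^2 + 2b(\omega-u) + c]/2$) turns the integral into an explicit polynomial integral
$$\lambda(\omega) = \int_0^\omega \frac{3a(\omega-u)^2 + 2b(\omega-u) + c}{\sqrt{u}}\,du = \frac{16a}{5}\omega^{5/2} + \frac{8b}{3}\omega^{3/2} + 2c\omega^{1/2},$$
which manifestly vanishes at $\omega = 0$ and, upon differentiation, recovers \eqref{eq.lp.1}, serving as an internal consistency check. The argument has no real obstacle: the crucial boundedness of $s/V'(s)$ near $s = 0$ was already handled inside the proof of Proposition~\ref{prop.lambda}, and the only subtlety is that $V'(0) = 0$, so the ratio $s/V'(s)$ must be read through \eqref{eq.implicit.1} rather than naively.
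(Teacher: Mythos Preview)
Your proof is correct and shares the paper's core idea: use \eqref{eq.implicit.1} to rewrite $s/V'(s)$ as a polynomial in $V(s)$, hence bounded on $[0,T(\omega)]$, and conclude $\lambda(\omega)=O(\sqrt\omega)$. The paper first integrates by parts and bounds $(s/V')'=V'(3aV+b)$ instead of $s/V'$ itself, but the logic is the same; your direct substitution has the pleasant side effect of producing the closed form \eqref{eq.lp.3} for $\lambda(\omega)$ outright, whereas the paper obtains that formula only later by integrating \eqref{eq.lp.1} and invoking this very proposition to fix the integration constant.
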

\begin{proof}
From \eqref{eq.implicit.2} and \eqref{eq.implicit.4} , \((s/V')' = V'(3aV + b)\). Therefore, \((s/V')'\)
is locally bounded on \([0,+\infty)\). From \eqref{eq.lambda.3} and previous calculations,
\begin{linenomath}
\begin{equation*}
\begin{split}
\lambda(\omega) &= 4\int_0^{T(\omega)} \frac{d}{ds}\left(\frac{s}{V'(s)}\right)\cdot\sqrt{\omega - V(s)}ds + 2c\sqrt{\omega}\\
&\leq 4\|(s/V')'\|_{L^{\infty}(0,T(\omega))} T(\omega)\sqrt{\omega} + 2c\sqrt{\omega}\sim\sqrt{\omega}
\end{split}
\end{equation*}
\end{linenomath}
as \(\omega\to 0^+\). Therefore, the statement of the proposition follows.
\end{proof}
\begin{proof}[Proof~of~Theorem~\ref{thm.uniqueness}]
\ 
(i). If \(0 < \sigma \leq 2\), the discriminant of the second degree polynomial in \eqref{eq.pol} is non-positive as
\(16b^2 - 32ac = 16ac(\sigma - 2)\leq 0\).
Therefore, \(\lambda'(\omega)\geq 0\) for every
\(\omega\). We claim that for every \(\lambda > 0\), there holds \(|K_\lambda| = 1\). On the contrary, 
let \(0 < \omega_1\leq\omega_2\) and \(R_1,R_2\in K_\lambda\) be such that 
\[
R_i '' - G'(R_i) - \omega_i R_i = 0,\quad R_i\in H^{1}_{r,+} (\mathbb{R};\mathbb{R}).
\]
Since
\(\lambda\) is monotonically non-decreasing on \([\omega_1,\omega_2]\), we have
\(\lambda(\omega_1)\leq\lambda(\omega_2)\). Since \(R_{1},R_{2}\in S(\lambda)\),
we have \(\lambda(\omega_1) = \lambda(\omega_2)\). If \(\omega_1 < \omega_2\), then \(\lambda'\equiv 0\)
on \([\omega_1,\omega_2]\). This implies \(a = b = c = 0\) in \eqref{eq.lp.1}, contradicting the assumption \(c > 0\) in \eqref{eq.V}.
Otherwise, we have \(\omega_1 = \omega_2\) and \(R_{1}\) and \(R_{2}\) are solutions in \(H^1 _{r,+}(\mathbb{R})\)
to \eqref{eq.elliptic} with \(\omega := \omega_1\). From Proposition~\ref{prop.ber-lio-str}, this problem has a unique solution,
therefore \(R_{1} = R_{2}\) proving that \(|K_\lambda| = 1\). 
\begin{figure}[!ht]
\centering\includegraphics[scale=0.6]{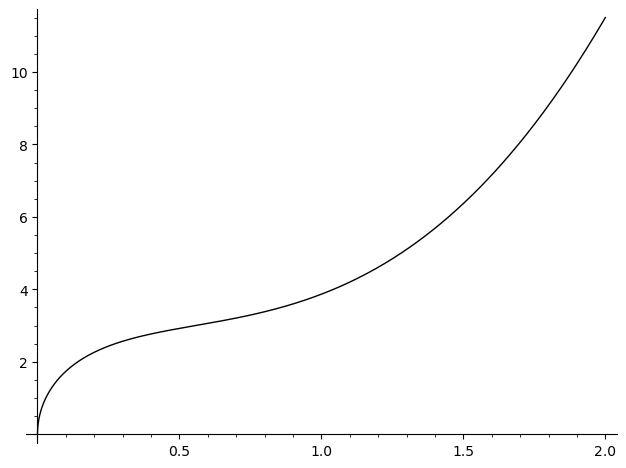}
\caption{\(a = 1\), \(b = -2\) and \(c = 3\). \(\sigma = \frac{4}{3} < 2\).}
\end{figure}
(ii). When \(2 < \sigma < 3\) there are 
\begin{linenomath}
\begin{equation}
\label{eq.omoM}
\omega_m < \omega_M
\end{equation}
\end{linenomath}
such that \(\lambda'(\omega_m) = \lambda'(\omega_M) = 0\). We set \(\lambda_M := \lambda(\omega_m)\)
and \(\lambda_m := \lambda(\omega_M)\). Clearly, \(\lambda_m < \lambda_M\). From Proposition~\ref{prop.limit} and \eqref{eq.lp.1}, 
\begin{linenomath}
\begin{equation}
\label{eq.lp.3}
\lambda(\omega) = \frac{16a}{5}\omega^{\frac{5}{2}} + \frac{8b}{3}\omega^{\frac{3}{2}} + 2c\omega^{\frac{1}{2}}.
\end{equation}
\end{linenomath}
By the Intermediate Value Theorem, for every \(\lambda\in (\lambda_m,\lambda_M)\), there are frequencies
\(\omega_1(\lambda) < \omega_2 (\lambda) < \omega_3 (\lambda)\) such that \(\lambda(\omega_i (\lambda)) = \lambda\) and
\begin{linenomath}
\begin{equation*}
\omega_1 (\lambda)\in (0,\omega_m),\quad \omega_2 (\lambda)\in (\omega_m,\omega_M),\quad \omega_3(\lambda)\in (\omega_M,+\infty).
\end{equation*}
\end{linenomath}
From the Implicit Function Theorem, \(\omega_i\in C^1\) in their domain and
\begin{linenomath}
\begin{equation}
\label{eq.omega-derivative}
\omega_1' (\lambda),\omega_3'(\lambda) > 0,\quad \omega_2' (\lambda) < 0.
\end{equation}
\end{linenomath}
We define
\begin{linenomath}
\begin{equation*}
g_1 (\lambda) := \int_{\omega_1 (\lambda)}^{\omega_2 (\lambda)} 
(\lambda(\omega) - \lambda)d\omega,\quad
g_2 (\lambda) := \int_{\omega_2 (\lambda)}^{\omega_3 (\lambda)} (\lambda - \lambda(\omega))d\omega.
\end{equation*}
\end{linenomath}
The function \(g_1\colon (\lambda_m,\lambda_M)\to (0,+\infty)\) is strictly decreasing,
\(g_1 (\lambda_M) = 0\), while
\(g_2\colon (\lambda_m,\lambda_M)\to(0,+\infty)\) is strictly increasing and \(g_2 (\lambda_m) = 0\). Therefore, there exists a unique \(\lambda_2\) such that 
\((g_2 - g_1)(\lambda_2) = 0\), and
\begin{linenomath}
\begin{equation}
\label{eq.uniqueness.4}
g_2 - g_1 > 0 \text{ on } (\lambda_2,\lambda_M),\quad g_2 - g_1 < 0 \text{ on } (\lambda_m,\lambda_2).
\end{equation}
\end{linenomath}
We define the
function \(e\colon (0,+\infty)\to\mathbb{R}\) as \(e(\omega) := E(R_\omega)\). In the proof of Theorem~\ref{thm.stability.gs}, it has been shown that \(G\) satisfies
the assumption \cite[(G2a)]{GG17}. Therefore, from \cite[Proposition~7]{GG17}, \(E\) is \(C^1(H^1,\mathbb{R})\). From (iii) of Proposition~\ref{prop.ber-lio-str} \(e\)
is also continuously differentiable; \eqref{eq.elliptic} reads 
\begin{linenomath}
\begin{equation*}
E'(R_\omega) = -\frac{\omega}{2} M'(R_\omega).
\end{equation*}
\end{linenomath}
Therefore,
\begin{linenomath}
\begin{equation*}
\label{eq.uniqueness.0}
e'(\omega) = \langle E'(R_\omega),\phi'(\omega) \rangle = \langle -\frac{\omega}{2} M'(R_\omega),\phi'(\omega)\rangle = 
-\frac{\omega}{2}\lambda'(\omega).
\end{equation*}
\end{linenomath}
For every \(\lambda\in (\lambda_m,\lambda_M)\), 
\(E\) has three critical points constrained to \(H^1 _{r,+}\), namely \(R_{\omega_1},R_{\omega_2}\) and \(R_{\omega_3}\). In order
to establish which ones are in \(\mathcal{G}_\lambda\), we compare pairwise their energy.
By integrating \eqref{eq.uniqueness.0} on the open intervals \((\omega_1,\omega_3)\), \((\omega_2,\omega_3)\) and 
\((\omega_1,\omega_3)\), we obtain
\begin{linenomath}
\begin{equation}
\label{eq.uniqueness.1}
\begin{split}
E(R_{\omega_3}) - E(R_{\omega_1}) &= e(\omega_3) - e(\omega_1) \\
&= \int_{\omega_1(\lambda)}^{\omega_3(\lambda)} e'(\omega)d\omega = -\frac{1}{2}\int_{\omega_1(\lambda)}^{\omega_3(\lambda)} \omega \lambda'(\omega)d\omega \\
&= \frac{1}{2}\int_{\omega_1(\lambda)}^{\omega_3(\lambda)} \lambda(\omega)d\omega - \frac{1}{2}\lambda(\omega_3 - \omega_1) \\
&= \frac{1}{2}(g_1 - g_2)(\lambda),
\end{split}
\end{equation}
\end{linenomath}
\begin{linenomath}
\begin{equation}
\label{eq.uniqueness.2}
\begin{split}
E(R_{\omega_2}) - E(R_{\omega_1}) &= e(\omega_2) - e(\omega_1) \\ 
&= \int_{\omega_1(\lambda)}^{\omega_2(\lambda)} e'(\omega)d\omega = 
-\frac{1}{2}\int_{\omega_1(\lambda)}^{\omega_2(\lambda)} \omega\lambda'(\omega)d\omega \\
&= \frac{1}{2} 
\int_{\omega_1(\lambda)}^{\omega_2(\lambda)} \lambda(\omega)d\omega - \frac{1}{2}\lambda(\omega_2 - \omega_1) \\
&= \frac{1}{2} g_1(\lambda) \geq 0.
\end{split}
\end{equation}
\end{linenomath}
Similarly, the Fundamental Theorem of Calculus and a by-parts integration yields
\begin{linenomath}
\begin{equation}
\label{eq.uniqueness.3}
\begin{split}
E(R_{\omega_3}) - E(R_{\omega_2}) &= e(\omega_3) - e(\omega_2) \\ 
&= \frac{1}{2}\int_{\omega_2(\lambda)}^{\omega_3(\lambda)} \lambda(\omega)d\omega - \frac{1}{2}\lambda(\omega_3 - \omega_2) \\
&= -\frac{1}{2}g_2(\lambda) \leq 0.
\end{split}
\end{equation}
\end{linenomath}
respectively. From \eqref{eq.uniqueness.4}, \eqref{eq.uniqueness.1}, \eqref{eq.uniqueness.2} and \eqref{eq.uniqueness.3} we are able to prove the main statement
of the theorem. 

\subsubsection*{First case: \(\lambda_m \leq \lambda < \lambda_2\)} 

From \eqref{eq.uniqueness.4}, \eqref{eq.uniqueness.1} and \eqref{eq.uniqueness.3},
\begin{linenomath}
\begin{equation}
\label{eq.uniqueness.5}
E(R_{\omega_1}) <  E(R_{\omega_3}) \leq E(R_{\omega_2}).
\end{equation}
\end{linenomath}
Therefore, 
\(|K_\lambda| = 1\) as it contains only \(R_{\omega_1}\).
\subsubsection*{Second case: \(\lambda_2 < \lambda \leq \lambda_M\)}
\begin{linenomath}
\begin{equation}
\label{eq.uniqueness.6}
E(R_{\omega_3}) <  E(R_{\omega_1}) \leq E(R_{\omega_2})
\end{equation}
\end{linenomath}
Therefore, \(|K_\lambda| = 1\) as it contains only \(R_{\omega_1}\).
\subsubsection*{Third case: \(\lambda = \lambda_2\)} \(E(R_{\omega_1}) = E(R_{\omega_3}) < E(R_{\omega_2})\). Therefore, \(|K_\lambda| = 2\): there are two positive, 
symmetric minima of \(E\) constrained to \(S(\lambda)\).
\begin{figure}[!ht]
\centering\includegraphics[scale=0.6]{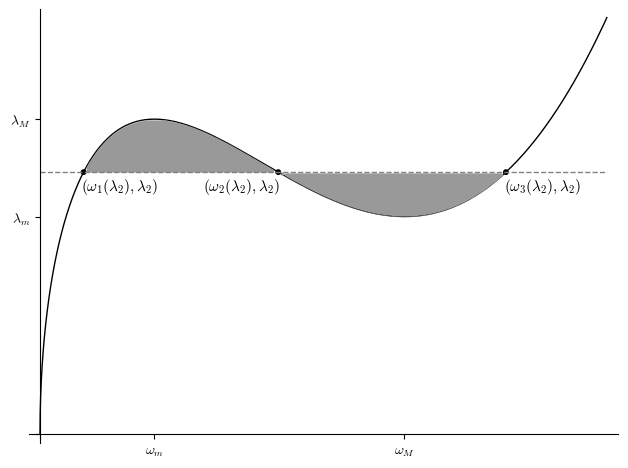}
\caption{\(a = 1\), \(b = -\sqrt{33}/2\) and \(c = 3\). \(\sigma = 2 < 11/4 < 3\). The two grey color filled
regions have measure \(g_1(\lambda_2)\) and \(g_2 (\lambda_2)\). Since they coincide, in \(S_r (\lambda_2)\) there
are two positive minima.}
\end{figure}
\end{proof}
\begin{proof}[Proof~of~Theorem~\ref{thm.stability}]
Since \(|K_\lambda|\leq 2\) for every \(\lambda > 0\), from \cite[Proposition~5]{GG17}, there
are finitely many \(\mathcal{G}_\lambda (u)\) as \(u\) varies in \(\mathcal{G}_\lambda\).
Therefore, from \cite[Theorem~1.3]{GG17}, \(\mathcal{G}_\lambda (u)\) is orbitally stable for every \(\lambda > 0\)
and for every \(u\in\mathcal{G}_\lambda\).
\end{proof}
\section{Non-degeneracy of minima}
Firstly, we are going to show that \(E\) is \(C^2(H^1(\R;\C),\R)\). Therefore, the problem of degeneracy or non-degeneracy of the Hessian at a given critical point is well defined. While the gradient term of \(E\) is analytic, the regularity
of the integral part of \(E\) relies on the following assumption introduced
in \cite[(G4)]{GG17} and \cite{BBBM10}: \(G\) is two-times continuously differentiable, \(G(0) = G'(0) = 0\) and there exist \(C\) and \(2 < p < q\) such that
\begin{linenomath}
\begin{equation*}
|G''(s)|\leq C(|s|^{p - 2} + |s|^{q - 2}).
\end{equation*}
\end{linenomath}
A proof of the regularity of \(E\) is in \cite[Proposition~7]{GG17}. However, under the assumptions \eqref{eq.C},
\eqref{eq.G} and \eqref{eq.V}, \(G\) does not satisfy it. In fact, (G4) implies \(G''(0) = 0\), which does not hold
for every choice of \(a,b\) and \(c\) in \eqref{eq.V}. We workaround this obstacle in the proof of the next proposition.
\begin{prop}
If \eqref{eq.C}, \eqref{eq.G} and \eqref{eq.V} hold, then \(E\) is \(C^2(H^1(\R;\C),\R)\), provided \(\sigma < 3\).
\end{prop}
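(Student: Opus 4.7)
The key observation I would exploit is that, although $|u|$ fails to be differentiable at $u=0$, the implicit relation \eqref{eq.V} expresses $s^{2}$ --- not $s$ --- as a cubic polynomial in $V$. Hence $V$ is intrinsically a function of $s^{2}$: defining $W:[0,+\infty)\to[0,+\infty)$ as the unique solution to $t=aW(t)^{3}+bW(t)^{2}+cW(t)$, the argument of Proposition~\ref{prop.branch} applies verbatim to $F(t,w)=t-aw^{3}-bw^{2}-cw$ (one has $\partial_{w}F(0,0)=-c\neq 0$ and $\partial_{w}F<0$ globally once $\sigma<3$) to produce $W$ as a $C^{2}$ function on $[0,+\infty)$ with $V(s)=W(s^{2})$ for all $s\geq 0$.

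With $W$ in hand the nonlinear term rewrites as
\[
G(|u|) = -\tfrac{1}{2}|u|^{2}V(|u|) = -\tfrac{1}{2}(u\bar{u})\,W(u\bar{u}),
\]
which is the composition of the smooth map $u\mapsto u\bar{u}$ from $\C$ to $[0,+\infty)$ with the $C^{2}$ function $t\mapsto -\tfrac{t}{2}W(t)$. Consequently $\Phi(u):=G(|u|)$ is genuinely of class $C^{2}$ on $\C$, with $\Phi(0)=0$ and $\nabla\Phi(0)=0$ following from $W(0)=0$. This is the workaround advertised in the paragraph preceding the proposition: we bypass the requirement $G''(0)=0$ coming from (G4) by routing the composition through $|u|^{2}=u\bar{u}$, so the non-smooth radial variable never appears as an intermediate step.

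What remains is routine Nemitsky theory, so I would derive polynomial growth bounds on $\Phi$, $\nabla\Phi$ and the Hessian of $\Phi$ by arguments parallel to those leading to \eqref{eq.G2a.3}--\eqref{eq.G2a}: differentiating the identity $2s=(3aV^{2}+2bV+c)V'$ once more controls $|V''(s)|$ by a power of $s$, and the expressions $G'(s)=-sV-\tfrac{s^{2}}{2}V'$ and $G''(s)=-V-2sV'-\tfrac{s^{2}}{2}V''$ translate these bounds into polynomial bounds on $\Phi$ and its derivatives. In one space dimension the embedding $H^{1}(\R)\hookrightarrow L^{\infty}(\R)\cap L^{r}(\R)$ for every $r\in[2,+\infty]$ makes the functional $u\mapsto\int_{\R}\Phi(u)\,dx$ of class $C^{2}(H^{1}(\R;\C),\R)$ by the same reasoning as in \cite[Proposition~7]{GG17}; adding the continuous quadratic kinetic term then yields $E\in C^{2}(H^{1}(\R;\C),\R)$.

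The step I expect to require the most care is the global $C^{2}$ regularity of $W$ on $[0,+\infty)$, rather than only near the origin. This is where the hypothesis $\sigma<3$ genuinely enters: it forces $3aw^{2}+2bw+c>0$ along the entire graph of $W$, blocking the blow-up of $W'$ that would otherwise truncate the continuation argument and would confine the workaround to a neighbourhood of the origin.
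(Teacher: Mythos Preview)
Your argument is correct but takes a different route from the paper. The paper's proof decomposes $G = G_0 + G_1$ with $G_1(s) = \tfrac{1}{2}G''(0)s^2$; then $\int G_1(|u|)\,dx = \tfrac{1}{2}G''(0)M(u)$ is smooth, while $G_0$ satisfies $G_0''(0)=0$ and hence the hypothesis (G4) of \cite{GG17}, so \cite[Proposition~7]{GG17} applies as a black box to the remaining term. The bound on $|V''|$ is derived only for $s\geq s_0$, which suffices because (G4) is a growth condition at infinity once $G_0''(0)=0$ is secured.

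Your approach instead exploits the fact---already present in the proof of Proposition~\ref{prop.branch}, where $V(s):=\varphi(s^2)$---that the nonlinearity factors through $|u|^2$: writing $G(|u|)=-\tfrac{1}{2}(u\bar u)\,\varphi(u\bar u)$ makes $\Phi(u)=G(|u|)$ visibly $C^2$ on $\C$, and then polynomial bounds on $\varphi,\varphi',\varphi''$ (equivalently on $V,V',V''$) feed a standard Nemitsky argument. This is more conceptual, in that it pinpoints the structural reason the non-smoothness of $|u|$ at $0$ is harmless here; the cost is that you cannot literally quote \cite[Proposition~7]{GG17}, which is stated for radial nonlinearities satisfying (G4), so the Nemitsky step must be redone (or the cited proposition mildly generalised). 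The paper's subtraction trick is quicker precisely because it lands exactly on the existing hypothesis.
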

\begin{proof}
We can write
\begin{linenomath}
\begin{equation}
\label{eq.nondeg.4}
G(s) = G(s) - \frac{1}{2} G''(0)s^2 + \frac{1}{2} G''(0)s^2 =: G_0 + G_1. 
\end{equation}
\end{linenomath}
Taking the derivative with respect to \(s\) in \eqref{eq.implicit.1}, we obtain
\begin{linenomath}
\begin{equation*}
2 = (6aV + b){V'}^2 + (3aV^2 + 2bV + c)V''.
\end{equation*}
\end{linenomath}
Since \(\sigma < 3\), there exists \(s_0\geq 1\) such that \(6aV + b\geq 0\) for every \(s\geq s_0\), by
(ii) of Proposition~\ref{prop.branch}; \(2\geq |(3aV^2 + 2bV + c)V''|\geq\frac{3 - \sigma}{3c} |V''|\), 
by \eqref{eq.G2a.1}. Therefore
\begin{linenomath}
\begin{equation}
\label{eq.nondeg.2}
|V''(s)|\leq\frac{6c}{3 - \sigma},\quad s\geq s_0.
\end{equation}
\end{linenomath}
Taking the second derivative in \eqref{eq.G}, we obtain
\begin{linenomath}
\begin{equation*}
G'' = -\frac{1}{2}\left(2V + 4sV' + V''\right).
\end{equation*}
\end{linenomath}
From \eqref{eq.G2a.3}, \eqref{eq.G2a.2} and \eqref{eq.nondeg.2}, we obtain
\begin{linenomath}
\begin{equation*}
|G''(s)|\leq\frac{18c}{3 - \sigma} s^2,\quad s\geq s_0.
\end{equation*}
\end{linenomath}
Therefore, for every \(s\geq s_0\), 
\begin{linenomath}
\begin{equation}
\label{eq.nondeg.3}
\begin{split}
|G_0 ''(s)|&\leq |G''(0)| + |G''(s)|\\
&\leq  \frac{1}{2} |G''(0)|s^2 + \frac{18c}{3 - \sigma} s^2\leq
C s^2,
\end{split}
\end{equation}
\end{linenomath}
for some \(C\geq 0\).
Since \(G_1''(0) = 0\), from \eqref{eq.nondeg.3}, \(G_1\) fulfils (G4) with \(p = 3\) and \(q = 4\).
From \eqref{eq.E}, \eqref{eq.M} and \eqref{eq.nondeg.4}, we can write
\begin{linenomath}
\begin{equation*}
E(u) = \frac{1}{2}\int_{-\infty}^{+\infty}|\nabla u|^2 dx + \frac{1}{2} M(u) + \int_{-\infty}^{+\infty} G_0 (u)dx.
\end{equation*}
\end{linenomath}
The first two terms are \(C^\infty (H^1,\R)\) and the third term is \(C^2 (H^1,\R)\)
by \cite[Proposition~7]{GG17}.
\end{proof}
\begin{proof}[Proof~of~Theorem~\ref{thm.nondeg}]
To prove this theorem, we rely on the consequences of the calculations done in \cite[Proof~of~Theorem~1.2,~\S3]{GG17}: 
given \(\omega_0\) such that \(R_{\omega_0}\) is a minimum, \(R_{\omega_0}\) is non-degenerate if and only if  \(\lambda'(\omega_0) = 0\).
From \eqref{eq.lp.1}, 
\begin{linenomath}
\begin{equation*}
\lambda'(\omega) = 
\omega^{-\frac{1}{2}}(8a\omega^2 + 4b\omega + c).
\end{equation*}
\end{linenomath}
This function has a positive double root if and only if \(16b^2 - 32ac = 0\),
that is \(\sigma = 2\). Therefore, if \(0 < \sigma < 2\), then \(\lambda'(\omega) > 0\) and all the minima are non-degenerate.
If \(2 < \sigma < 3\), then \(\lambda'\) has two zeroes, \(\omega_m\) and 
\(\omega_M\), introduced in \eqref{eq.omoM}. On the constraint \(S(\lambda_M)\), there are two
critical points, \(R_{\omega_m} = R_{\omega_1} = R_{\omega_2}\) and \(R_{\omega_3}\). From the \eqref{eq.uniqueness.5},
\(E(R_{\omega_m}) > E(R_{\omega_3})\), so \(E(R_{\omega_m})\) is not a minimum; similarly, 
on the constraint \(S(\lambda_m)\), there are two critical points, \(R_{\omega_1}\) and \(R_{\omega_M} = R_{\omega_2} = R_{\omega_3}\). From \eqref{eq.uniqueness.6},
\(E(R_{\omega_M}) > E(R_{\omega_1})\), so \(E(R_{\omega_M})\) is not a minimum. Therefore,
there are not degenerate minima in the case \(2 < \sigma < 3\) as well.

Finally, for a non-linearity \(G\) satisfying \eqref{eq.G} and \eqref{eq.V}, if \(\sigma = 2\), then \(\lambda'\) has a single positive double root 
given by 
\(\omega_d := -b/4a\). 
From Theorem~\ref{thm.uniqueness}, \(|K_\lambda| = 1\). Therefore,
\(R_{\omega_d}\) is a minimum, degenerate. We set \(\lambda_d := \|R_{\omega_d}\|_2^2\).
Therefore, \(E\) has a positive, degenerate minimum on \(S(\lambda_d)\).
\begin{figure}[!ht]
\centering\includegraphics[scale=0.6]{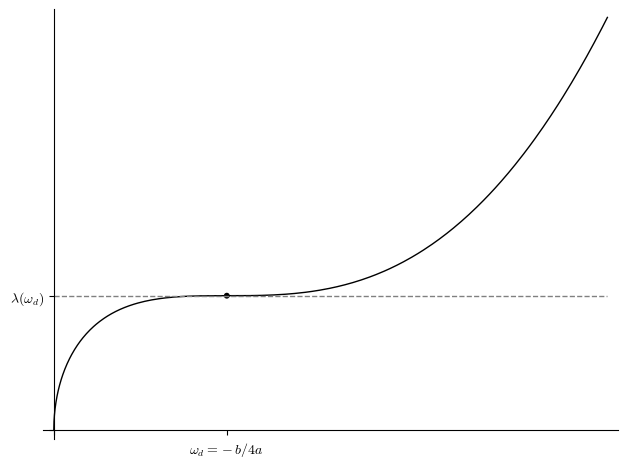}
\caption{\(a = 2\), \(b = -2\) and \(c = 1\). \(\sigma = 2\). \(\omega_d = 1/4\)
and \(R_{1/4}\) is a degenerate minimum of \(E\) constrained to \(S_r (\lambda(1/4))\).}
\end{figure}
\end{proof}
\begin{rem}
If \(\sigma\geq 3\), the existence domain of the function \(V\) cannot be extended beyond the bounded interval 
\((0,\ell^2)\), where \(\ell\) has been defined in \eqref{eq.branch.1}. In fact, \(\lim_{s\to \ell^2} V'(s) = +\infty\).
In principle, this is not an obstacle to defining the energy functional as in \eqref{eq.E}, after extending \(V\) as a \textsl{continuous} function on \([0,+\infty)\) and 
defining \(G\) as in \eqref{eq.G}. In this work, however, we preferred to focus on non-linearities \(G\) which are at least continuously differentiable, and thus falling into the variational setting studied in \cite{GG17} and other references as 
\cite{BBGM07,Iko14}.
\end{rem}
\begin{figure}[!ht]
\centering\includegraphics[scale=0.6]{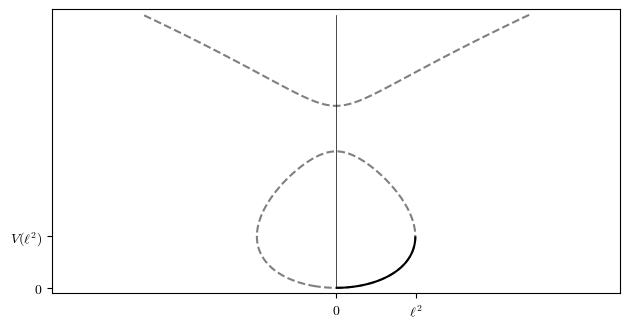}
\caption{\(a = 1\), \(b = -7/2\) and \(c = 3\). \(\sigma = 49/12 > 3\). The graph of \(V\) is 
represented by the solid black line. \(V\) cannot be extended smoothly beyond \((0,\ell^2)\), \(\ell\) being
defined in \eqref{eq.branch.1}.}
\label{fig.sigma-supercritical}
\end{figure}
\newpage
\def\cprime{$'$} \def\cprime{$'$} \def\cprime{$'$} \def\cprime{$'$}
  \def\cprime{$'$} \def\cprime{$'$} \def\cprime{$'$} \def\cprime{$'$}
  \def\cprime{$'$} \def\polhk#1{\setbox0=\hbox{#1}{\ooalign{\hidewidth
  \lower1.5ex\hbox{`}\hidewidth\crcr\unhbox0}}} \def\cprime{$'$}
  \def\cprime{$'$} \def\cprime{$'$}
\providecommand{\bysame}{\leavevmode\hbox to3em{\hrulefill}\thinspace}
\providecommand{\MR}{\relax\ifhmode\unskip\space\fi MR }
\providecommand{\MRhref}[2]{%
  \href{http://www.ams.org/mathscinet-getitem?mr=#1}{#2}
}
\providecommand{\href}[2]{#2}

\end{document}